\newlength{\defbaselineskip}
\newcommand{\setlinespacing}[1]%
{\setlength{\baselineskip}{#1 \defbaselineskip}}
\def\AA{{\mathcal A}}
\def\CC{{\mathcal C}}
\def\DD{{\mathcal D}}
\def\MM{{\mathcal M}}
\def\RR{{\mathcal R}}
\def\TT{{\mathcal T}}
\def\bbC{\mathbb{C}}
\def\1{\mathbf{1}}
\def\bbC{\mathbb{C}}
\def\bfC{\mathbf{C}}
\def\bfD{\mathbf{D}}
\def\bfT{\mathbf{T}}
\def\bfCD{\mathbf{CD}}
\def\bfAB{\mathbf{AB}}
\def\bfS{\mathbf{S}}
\def\bfA{\mathbf{A}}
\def\bfB{\mathbf{B}}
\def\bfI{\mathbf{I}}
\def\bfN{\mathbf{N}}
\newcommand{\de}{\bigtriangleup}
\theoremstyle{plain}
\newtheorem{thm}{Theorem}[section]
\newtheorem{cor}[thm]{Corollary}
\newtheorem{lem}[thm]{Lemma}
\newtheorem{prop}[thm]{Proposition}
\newtheorem{rmk}[thm]{Remark}
\theoremstyle{definition}
\def\dss{\displaystyle}
\theoremstyle{remark}
\numberwithin{equation}{section}
\begin{document}
	
	\def\dss{\displaystyle}
	
	\title{On Some Algebraic Properties of Block Toeplitz Matrices with Commuting Entries}
	
	\author[Muhammad Ahsan Khan and Ameur Yagoub ]{Muhammad Ahsan Khan $^1$, Ameur Yagoub $^2$}
	
	\address{$^{1}$ Department of Mathematics, University of Kotli Azad Jammu $\&$ Kashmir, Kotli 11100, Azad Jammu $\&$ Kashmir, Pakistan.}
\email{ahsankhan388@hotmail.com}	
	\address{$^{2}$  Laboratoire de math\'ematiques pures et appliqu\'ees. Universit\'e de Amar Telidji. Laghouat, 03000. Algeria.}
	\email{a.yagoub@lagh-univ.dz}
	\subjclass{15A27, 15A30, 15B05}
	\begin{abstract}
		Toeplitz matrices are ubiquitous and play important roles across many areas of  mathematics. In this paper, we present some algebraic results concerning block Toeplitz matrices with block entries belonging to a commutative algebra $\AA$. The  characterization of normal block Toeplitz matrices with entries from $\AA$ is also obtained. 
	\end{abstract}
	\keywords{Block Toeplitz matrices, Displacement matrix, Normal matrices  }
	\maketitle
	\section{Introduction}	
	Toeplitz  matrix is  important  due  to  its  typical  property  that  the  entries 
	in  the  matrix  depend  only  on  the  differences  of  the  indices,  and  as  a  result, 
	the  entries  on  its  main  diagonal  as  well  as  those  lying parallel to main diagonal  are  constant. These matrices arise naturally in several ﬁelds of mathematics, as
	well as applied areas as signal processing or time series analysis. The monographs dedicated to the subject are \cite{widom,ioh} and \cite{grenand-szego}.
	
	The corresponding general theory of block Toeplitz matrices is less developed mostly due to intrinsic algebraic difficulties that appear with respect to the scalar case. Block Toeplitz matrices appear in \cite{shalom} but very briefly in \cite{zimmerman,MAKDT,MAK}. 
	
	In \cite{gu-patton}, the authors have proved the variety of  algebraic results concerning scalar Toeplitz matrices.  Among other things, they have obtained the necessary and sufficient condition for the product  $AB-CD$ to be a Toeplitz matrix and $AB-CD=0$, provided that $A,B,C$ and $D$ are Toeplitz matrices. They have also completely characterized normal Toeplitz matrices. We refer the reader to \cite{DI,FKKL, DI1, DIC, DIC1,G, I} where  characterization of normal Toeplitz matrices have been discussed. 
	
	In \cite{MAK1} some  generalization of the results of \cite{gu-patton} concerning the product $AB-CD$ of block Toeplitz matrices has been made. Apart from this \cite{MAK1} has also classified normal block Toeplitz matrices where the entries are taken from the algebra of  scalar diagonal matrices.  We pursue here this investigation, obtaining natural generalization of the results of \cite{gu-patton} by taking entries of the block Toeplitz matrices from a fixed commutative algebra of scalar matrices. We will give new proofs, and refinements of some of the results of \cite{MAK1} and \cite{gu-patton} in a more natural way then that we obtained in \cite{MAK1}. Most importantly, we obtained the criteria for  characterizing  normal block Toeplitz matrices with commuting entries.

	The remaining paper is organized as follows: By means of Section~2, we want to make sure that the reader has become familiar to basic notations and useful facts, needed when we are going to start the main work in upcoming sections. In Section~3 we will provide the generalization and refinements of  the results of \cite{gu-patton} and \cite{MAK1} concerning product of block Toeplitz matrices with commuting entries. Section~4 is concerned with the commutation of certain block Toeplitz matrices. In the last Section, we obtain the characterization of normal block Toeplitz matrices with commuting entries.
	\section{Preliminaries}	
	As usual  $\bbC$ stands for the set of complex numbers.  We designate by $\MM_{n}$ the algebra of $n\times n$ matrices with entries from $\bbC$. We will prefer to label the rows and columns of $n\times n$ matrices from $1$ to $n$; so $A\in\MM_{n}$ is written $A=(a_{i,j})_{i,j=1}^{n}$ with $a_{i,j}\in\bbC$. Then $\TT_{n}\subset\MM_{n}$ is the space of scalar Toeplitz matrices.  
	We will mostly be interested in block matrices, that is, matrices whose elements are
	not necessarily scalars, but elements in $\MM_{d}$. Thus a block Toeplitz matrix is actually an $ nd\times nd $ matrix, but which has been decomposed in $ n^2 $ blocks of dimension $ d $, and  these blocks are constant parallel to the main diagonal.\\
	Throughout in this paper, we will denote $n\times n$ block matrices by bold capital letters.  As a preparation, let us remember that the entries of scalar Toeplitz matrices are complex numbers. Therefore, in order to obtain relevant results for block Toeplitz matrices, as a first step  we will assume that their entries belong to a
	fixed  commutative algebra of $\MM_{d}$, that we will denote by $\AA$. If $\AA$ is a subalgebra of $\MM_{d}$, then $\AA^\prime$ denotes the commutant of $\AA$ (the set of all $d\times d$ matrices commuting with every element of $\AA$). It is straightforward  to show  that $\AA^\prime$ is also an algebra. 
	
	If $A=\begin{pmatrix}
		&0\\
		& A_1\\
		& A_2\\
		
		&\vdots \\
		&A_{n-1}
	\end{pmatrix}$ and  
	$\Omega=\begin{pmatrix}
		&0\\
		&\Omega_1\\
		&\Omega_2\\
		&\vdots \\
		&\Omega_{n-1}
	\end{pmatrix}$ are column vectors with entries from  $\AA$, then let $\bfT(A,\Omega)$ denote the $n\times n$ block Toeplitz matrix of the form: 
	\begin{equation}\label{toeplitzmatrix}
		\mathbf{T}(A, \Omega)=
		\begin{pmatrix} 
			0 &\Omega_{1}^*&  \Omega_{2}^*&\ldots  & \Omega_{n-1}^*\\
			A_{1}&0       &  \Omega_{1}^* &\ldots  & \Omega_{n-2}^*\\
			A_{2}& A_{1}     &  0        &\ldots  & \Omega_{n-3}^*\\
			\vdots    & \vdots      & \vdots        & \ddots\\
			A_{n-1}&  A_{n-2} &  A_{n-3}   & \ldots & 0
		\end{pmatrix}
		.\end{equation}
	Then $\bfT(A,\Omega)+\bfA_0$ will describe the general block Toeplitz matrix . If $\AA$ is a commutative subalgebra of $\MM_{d}$, then we will use the following notations:
	\begin{itemize}
		\item $\MM_{n}\otimes\AA$ is the collection of $ n\times n $ block matrices whose entries all belong to $ \AA $;
		\item $\TT_{n}\otimes\AA$ is the collection of $ n\times n $ block Toeplitz matrices  whose entries all belong to $ \AA $;	
		\item $\DD_{n}\otimes\AA$ is the collection of $ n\times n $ diagonal block Toeplitz matrices whose entries all belong to $ \AA $;
		\item $\CC\otimes\AA$ is the collection of all $n\times 1$ block matrices whose entries all belong to $\AA$ ;
		\item $\RR\otimes\AA$ is the collection of all $1\times n$ block matrices whose entries all belong to $\AA$. 
	\end{itemize}
	It is obvious that  $\DD_{n}\otimes\AA\subset\TT_{n}\otimes\AA\subset\MM_{n}\otimes \AA$. If $A\in\CC\otimes\AA$ (or $\RR\otimes\AA$) and $X\in\AA^\prime$, then we will use the notation $X\diamond A$ to indicate that $X$ is multiplied in a usual way with every entry of $A$. 
	The results of our paper are related to the vectors which define upper and lower triangular parts of block Toeplitz matrices, so our notation for block Toeplitz matrices also pertains to these vectors.

	Throughout whenever we will use the notation $ 	\mathbf{T}(A, \Omega) $, $ A $ and $ \Omega $ will be in $\CC\otimes\AA$ with  the first entry $0$.  Following this notation it is immediate that $\bfT(A,\Omega)^*=\bfT(\Omega,A)$.  
	For fixed $1\leq k\leq n$, let $P_{k-1}$ be the vectors in $\RR\otimes\AA$ whose entry at $k-1$ position is $I$ and all other entries are zero; thus 
	\begin{align*}
		P_0&=
		\begin{pmatrix}
			I ,0,0,\cdots ,0
		\end{pmatrix}\\
		P_1&=
		\begin{pmatrix}
			0 ,I,0,\cdots ,0
		\end{pmatrix}\\
		P_2&=
		\begin{pmatrix}
			0 ,0,I,\cdots ,0
		\end{pmatrix}
	\end{align*}
	etc. 
	Let $I\in\AA$ be the identity matrix and $\bfS$ denote the matrix consisting of $I$ along the subdiagonal and zero elsewhere, i.e., 
	\[\bfS=
	\begin{pmatrix}
		0 &  0  &0   &\ldots  &  0     &     0\\
		I    &  0 &0 &\ldots  &  0     &     0\\
		0  & I  & 0 & \ldots  &  0     &     0\\
		\vdots& \vdots & \vdots &\ddots& \vdots\\
		0 &  0  & 0&\ldots & I&0
	\end{pmatrix}. 
	\]
	Note that, $ \bfS^n=\bfS^*{}^n =0$. If $X\in\AA^\prime$, then we denote the matrix $\bfS+X\diamond P_0^*P_{n-1}$ by $\bfS_X$. For 
	$A=
	\begin{pmatrix}
		&0\\
		&A_{1}\\
		&\vdots\\
		&A_{n-1}
	\end{pmatrix}\in\CC\otimes\AA
	$, we define
	$\widetilde{A}=
	\begin{pmatrix}
		0\\
		{A_{n-1}^*}\\
		\vdots \\
		{A_{1}^*}
	\end{pmatrix}.
	$
	If $\bfA\in\MM_{n}\otimes\AA$, then the displacement matrix for $\bfA$ is defined as
	\[
	\de(\bfA)=\bfA-\bfS\bfA\bfS^*
	.\]  See \cite{HK} and \cite{KS}  for other types of displacement matrices. Note, in particular that if $\bfI\in\MM_{n}\otimes\AA$, then $\de(\bfI)=\bfI-\bfS\bfS^*=P_0^*P_0$.

	The below written results from \cite{gu-patton} are also valid for block  matrices with entries from $\AA$. We will add their proofs just for completeness.
	\begin{lem}
		If $\bfA\in\MM_{n}\otimes\AA$, then  $\bfA=\dss\sum_{k=0}^{n-1}\bfS^k\de(\bfA){\bfS^*}^{k}$.
	\end{lem}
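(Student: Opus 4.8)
The plan is to recognize the sum as a telescoping sum generated by the definition of the displacement operator. Expanding $\de(\bfA)=\bfA-\bfS\bfA\bfS^*$ and conjugating by $\bfS^k$ on the left and ${\bfS^*}^k$ on the right, I would first record the elementary identity
\[
\bfS^k\de(\bfA){\bfS^*}^{k}
=\bfS^k\bigl(\bfA-\bfS\bfA\bfS^*\bigr){\bfS^*}^{k}
=\bfS^k\bfA{\bfS^*}^{k}-\bfS^{k+1}\bfA{\bfS^*}^{k+1},
\]
which holds for every $0\le k\le n-1$ purely by associativity of matrix multiplication in $\MM_n\otimes\AA$; no commutativity of $\AA$ is needed here, only that $\bfS$ and $\bfS^*$ are fixed matrices that can be pulled through the factored expression.

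Next I would sum this identity over $k=0,1,\dots,n-1$. Writing $B_k=\bfS^k\bfA{\bfS^*}^{k}$, the right-hand side becomes $\dss\sum_{k=0}^{n-1}(B_k-B_{k+1})$, a telescoping sum that collapses to $B_0-B_n=\bfA-\bfS^{n}\bfA{\bfS^*}^{n}$. At this point the whole statement reduces to showing the boundary term vanishes.

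The final step is to invoke the nilpotency of the shift recorded in the preliminaries, namely $\bfS^n=0$ and hence also ${\bfS^*}^{n}=0$. Either one forces $\bfS^{n}\bfA{\bfS^*}^{n}=0$, so the telescoped sum equals $\bfA$, which is exactly the claim.

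I do not expect any genuine obstacle here: the argument is a one-line telescoping computation, and the only points requiring care are (i) verifying the per-term identity above by correctly distributing $\bfS^k$ and ${\bfS^*}^k$ across the two summands of $\de(\bfA)$, and (ii) citing $\bfS^n={\bfS^*}^n=0$ to kill the surviving boundary block. If anything, the mild subtlety is bookkeeping the index shift so that the upper boundary term is $B_n$ (with exponent $n$) rather than $B_{n-1}$, since it is precisely the exponent $n$ that triggers the nilpotency.
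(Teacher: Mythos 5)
Your proposal is correct and coincides with the paper's own proof: both expand $\bfS^k\de(\bfA){\bfS^*}^{k}$ into the difference $\bfS^k\bfA{\bfS^*}^{k}-\bfS^{k+1}\bfA{\bfS^*}^{k+1}$, telescope the sum, and kill the boundary term via $\bfS^n={\bfS^*}^n=0$. No differences worth noting.
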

	\begin{proof}
		\begin{align*}
			\dss\sum_{k=0}^{n-1}\bfS^k(\bigtriangleup(\bfA))\bfS^{k*}
			&=\dss\sum_{k=0}^{n-1}\bfS^k(\bfA-\bfS \bfA\bfS^*)\bfS^{k*}\\
			&=\dss\sum_{k=0}^{n-1}(\bfS^k\bfA\bfS^{k*}-\bfS^{k+1}\bfA\bfS^{k+1*})=\bfA-\bfS^n\bfA\bfS^{n*}=\bfA.
		\end{align*}
	\end{proof}
	Thus to show that $\bfA=0$, it will be sufficient to show that $\de(\bfA)=0$. 
	We have the following analogue of the Lemma 2.2 of \cite{gu-patton} for block Toeplitz matrices with commuting entries.
	\begin{lem}\label{Toeplitz}
		$\bfA\in\MM_{n}\otimes\AA$  is Toeplitz if and only if there exist vectors  $A,\Omega\in\CC\otimes\AA$  such that $\bigtriangleup(\bfA)=AP_{0}+P_{0}^{\ast}\Omega^*.$
	\end{lem}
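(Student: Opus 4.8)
The plan is to derive both implications from a single entrywise description of the displacement $\de(\bfA)$, combined with the observation that $AP_0+P_0^*\Omega^*$ is precisely the general block matrix supported on the first block-row and first block-column. Throughout I index the $n$ block rows and columns by $0,1,\dots,n-1$, consistently with the indexing of $P_0,\dots,P_{n-1}$ and of the entries $A_0,\dots,A_{n-1}$.

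First I would compute $\bfS\bfA\bfS^*$ blockwise. Writing $\bfA=(\bfA_{i,j})_{i,j=0}^{n-1}$ with each $\bfA_{i,j}\in\AA$, and using that $\bfS$ sends the $j$-th block coordinate to the $(j{+}1)$-st (annihilating the last) while $\bfS^*$ sends it to the $(j{-}1)$-st (annihilating the first), one obtains
\[
(\bfS\bfA\bfS^*)_{i,j}=
\begin{cases}
\bfA_{i-1,j-1}, & i\ge1\ \text{and}\ j\ge1,\\
0, & i=0\ \text{or}\ j=0.
\end{cases}
\]
Therefore $\de(\bfA)_{i,j}=\bfA_{i,j}$ whenever $i=0$ or $j=0$, while $\de(\bfA)_{i,j}=\bfA_{i,j}-\bfA_{i-1,j-1}$ for $i,j\ge1$. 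Since $\bfA$ is block Toeplitz exactly when $\bfA_{i,j}=\bfA_{i-1,j-1}$ for all $i,j\ge1$, this gives the clean reformulation: $\bfA$ is Toeplitz if and only if $\de(\bfA)$ is supported on the first block-row and the first block-column.

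Next I would match the right-hand side to this support. For $A=(A_i)_{i=0}^{n-1}\in\CC\otimes\AA$ the product $AP_0$ has $A$ as its first block-column and zeros elsewhere, and for $\Omega=(\Omega_j)_{j=0}^{n-1}\in\CC\otimes\AA$ the product $P_0^*\Omega^*$ has $\Omega^*$ as its first block-row and zeros elsewhere; the two supports meet only at the $(0,0)$ corner, where the entry is $A_0+\Omega_0^*$. Hence, as $A$ and $\Omega$ range over $\CC\otimes\AA$, the matrices $AP_0+P_0^*\Omega^*$ range over exactly those supported on the first block-row and first block-column. The implication ($\Leftarrow$) is then immediate: if $\de(\bfA)=AP_0+P_0^*\Omega^*$, then for $i,j\ge1$ the right-hand side vanishes, so $\bfA_{i,j}=\bfA_{i-1,j-1}$ and $\bfA$ is Toeplitz. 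For ($\Rightarrow$), assuming $\bfA$ Toeplitz I would set $A_0:=0$, $A_i:=\bfA_{i,0}$ for $i\ge1$, and $\Omega_j:=\bfA_{0,j}^*$ for all $j$, and verify on the first block-row, the first block-column, and the corner that $\de(\bfA)=AP_0+P_0^*\Omega^*$.

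I expect the only delicate point to be the membership $\Omega\in\CC\otimes\AA$: the entries of $\de(\bfA)$ lie in $\AA$, so $A\in\CC\otimes\AA$ is automatic, but $\Omega_j=\bfA_{0,j}^*$ requires that $\AA$ be closed under adjoints in order to conclude $\Omega_j\in\AA$ (equivalently, that the first block-row $\Omega^*$ has entries in $\AA$). This is where the $*$-structure of $\AA$ enters, exactly as complex conjugation does in the scalar case of \cite{gu-patton}; apart from this, the argument is routine index bookkeeping, the only real care being the overlap of the two supports at the $(0,0)$ corner.
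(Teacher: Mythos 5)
Your proof is correct and takes essentially the same route as the paper: both compute $\de(\bfA)=\bfA-\bfS\bfA\bfS^*$ explicitly, observe that $\bfA$ is Toeplitz exactly when $\de(\bfA)$ is supported on the first block-row and first block-column, and note that $AP_0+P_0^*\Omega^*$ parametrizes precisely such matrices. Your closing remark that writing the first-row entries as $\Omega_j^*$ with $\Omega_j\in\AA$ requires $\AA$ to be closed under adjoints is a fair point that the paper leaves implicit.
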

	\begin{proof}
		Suppose that $\bfA=\bfT(A,\Omega)+\bfA_0\in\TT_{n}\otimes\AA$. Since the displacement matrix for $\bfA$ is defined as $\de(\bfA)=\bfA-\bfS\bfA\bfS^{\ast}.$
		Then  simple computation yields that 
		\[\de (\bfA)=
		\begin{pmatrix} 
			A_{0} & \Omega_{1}^* & \Omega_{2}^*&\ldots  & \Omega_{n-1}^*\\
			A_{1}&  0    &  0   &\ldots  & 0\\
			A_{2}&  0    &  0   &\ldots  & 0\\
			\vdots& \vdots   & \vdots & \ddots\\
			A_{n-1}&  0 &  0  & \ldots & 0
		\end{pmatrix}
		.\] 
		If we take 
		$
		A = \begin{pmatrix}
			A_{0}\\
			A_{-1}\\
			\vdots\\
			A_{1-n}
		\end{pmatrix}
		$ and $\Omega =
		\begin{pmatrix}
			0 \\
			\Omega_{1}\\
			\vdots\\
			\Omega_{n-1}
		\end{pmatrix},
		$ then one can easily verify that $\de(\bfA)=AP_{0}+P_{0}^{\ast}\Omega^*.$
		For the converse, let $\bfA=(A_{ij})_{i,j=1}^{n}\in\MM_{n}\otimes\AA$. 
		Suppose then  that $
		A = \begin{pmatrix}
			A_{0}\\
			A_{1}\\
			\vdots\\
			A_{n-1}
		\end{pmatrix}
		$
		and $\Omega=
		\begin{pmatrix}
			\Omega_{0}\\
			\Omega_{1}\\
			\vdots\\
			\Omega_{n-1}
		\end{pmatrix}
		$ be vectors in $\CC\otimes\AA$, since we have 
		\[
		\de(\bfA)=AP_{0}+P_{0}^{\ast}\Omega^*
		\] $\implies$  
		\[
		\bfA=\bfS\bfA\bfS^{\ast}+AP_{0}+P_{0}^{\ast}\Omega^*
		\]$ \implies$ 
		\[\bfA=
		\begin{pmatrix} 
			A_{0}+\Omega_{0}^{\ast}&\Omega_{1}^{\ast}&\Omega_{2}^{\ast}&\ldots&\Omega_{n-1}^{\ast}\\
			A_{1} &  A_{1,1} &  A_{1,2} &\ldots & A_{1,n-1}\\
			\vdots & \vdots  & \vdots  & \ddots\\
			A_{n-1}&   A_{n-1,1} &  A_{n-1,2} & \ldots & A_{n-1,n-1}
		\end{pmatrix}.
		\]  
		Compairing entries along the diagonals yields that 
		$A_{i_{1},j_{1}}=A_{i_{2},j_{2}}$, whenever $i_{1}-j_{1}=i_{2}-j_{2}$, where $0\leq i_{1}, i_{2},j_{1},j_{2}\leq n-1$, i.e., $\bfA\in\TT_{n}\otimes\AA$ .
	\end{proof}
	\section{ Product of Block Toeplitz Matrices with Commuting Entries}
	In this section we will obtain the results  concerning the product $\bfAB-\bfCD$ ,where $\bfA,\bfB,\bfC$ and $\bfD$ are block Toeplitz matrices with entries from $\AA$.\\
	We start  with the following Lemma, which describes the structure of the displacement matrix for the product of two block Toeplitz matrices with commuting entries. 
	\begin{lem}
		Let $C=\begin{pmatrix}
			&0\\
			& C_1\\
			& C_2\\
			&\vdots \\
			&C_{n-1}
		\end{pmatrix}$, $\Gamma=\begin{pmatrix}
			&0\\
			&\Gamma_1\\
			&\Gamma_2\\
			&\vdots \\
			&\Gamma_{n-1}
		\end{pmatrix}$, $D=\begin{pmatrix}
			&0\\
			&D_1\\
			&D_2\\
			&\vdots\\
			&D_{n-1}
		\end{pmatrix}$, and 
		$\Theta =\begin{pmatrix}
			&0\\
			&\Theta_1\\
			&\Theta_2\\
			&\vdots \\
			&\Theta_{n-1}
		\end{pmatrix}$, be vectors in $\CC\otimes\AA$.  If $\mathbf{C}=\mathbf{T}(C,\Gamma)+\mathbf{C}_0$ and $\mathbf{D}=\mathbf{T}(D,\Theta)+\mathbf{D}_0$, then
		
		\begin{equation}\label{Toeplitzproduct}
			\de(\mathbf{CD})=C\Theta^*-\widetilde{\Gamma}\widetilde{D}^*+\left[\bfC D+\bfD_0C+\bfC_0\bfD_0P_0^*\right]P_0+P_0^*\left[\Gamma^*\bfS\bfD\bfS^*+\Theta^*\bfC_0\right].
		\end{equation}.
	\end{lem}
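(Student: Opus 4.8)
The plan is to express the displacement of the product through the displacements of the two factors, which are supplied by Lemma~\ref{Toeplitz}. Starting from $\de(\mathbf{CD})=\mathbf{CD}-\bfS\mathbf{CD}\bfS^{*}$, I would insert the resolution of the identity $\bfI=\bfS^{*}\bfS+P_{n-1}^{*}P_{n-1}$ (valid because $\bfS$ is the subdiagonal shift, so $\bfS^{*}\bfS$ is the identity off the last block) between $\bfC$ and $\bfD$ in the second summand. This splits it as
\[
\bfS\mathbf{CD}\bfS^{*}=(\bfS\bfC\bfS^{*})(\bfS\bfD\bfS^{*})+\bfS\bfC P_{n-1}^{*}P_{n-1}\bfD\bfS^{*}.
\]
Substituting $\bfS\bfC\bfS^{*}=\bfC-\de(\bfC)$ and $\bfS\bfD\bfS^{*}=\bfD-\de(\bfD)$, expanding, and cancelling $\bfC\bfD$ against the leading $\mathbf{CD}$ gives
\[
\de(\mathbf{CD})=\bfC\,\de(\bfD)+\de(\bfC)\,\bfD-\de(\bfC)\,\de(\bfD)-\bfS\bfC P_{n-1}^{*}P_{n-1}\bfD\bfS^{*}.
\]

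The last summand is exactly the source of $-\widetilde{\Gamma}\widetilde{D}^{*}$: the vector $\bfC P_{n-1}^{*}$ is the last block-column of $\bfC$, namely $(\Gamma_{n-1}^{*},\dots,\Gamma_{1}^{*},C_{0})^{\toep}$, so left multiplication by $\bfS$ drops $C_{0}$ and shifts down to produce precisely $\widetilde{\Gamma}$; dually $P_{n-1}\bfD\bfS^{*}=\widetilde{D}^{*}$. Hence $\bfS\bfC P_{n-1}^{*}P_{n-1}\bfD\bfS^{*}=\widetilde{\Gamma}\widetilde{D}^{*}$.

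For the first three summands I would substitute the displacement formulas of the factors. Here a word of caution is needed: since the defining vectors $C,\Gamma,D,\Theta$ carry a zero first entry (the diagonal being carried by $\bfC_{0},\bfD_{0}$), Lemma~\ref{Toeplitz} yields
\[
\de(\bfC)=CP_{0}+P_{0}^{*}\Gamma^{*}+\bfC_{0}P_{0}^{*}P_{0},\qquad \de(\bfD)=DP_{0}+P_{0}^{*}\Theta^{*}+\bfD_{0}P_{0}^{*}P_{0},
\]
with the diagonal corrections $\bfC_{0}P_{0}^{*}P_{0}$, $\bfD_{0}P_{0}^{*}P_{0}$ that are easy to overlook. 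Expanding $\bfC\,\de(\bfD)+\de(\bfC)\,\bfD-\de(\bfC)\,\de(\bfD)$ is then routine, using repeatedly $P_{0}P_{0}^{*}=I$, $P_{0}C=P_{0}D=0$, $\Gamma^{*}P_{0}^{*}=\Theta^{*}P_{0}^{*}=0$, together with $\bfC P_{0}^{*}=C+\bfC_{0}P_{0}^{*}$ and $P_{0}\bfD=\Theta^{*}+D_{0}P_{0}$.

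In collecting terms, several $C\Theta^{*}$ contributions combine to a single $C\Theta^{*}$, and the two occurrences of $P_{0}^{*}(\Gamma^{*}D)P_{0}$ cancel once one rewrites $P_{0}^{*}\Gamma^{*}\bfD-P_{0}^{*}(\Gamma^{*}D)P_{0}=P_{0}^{*}\Gamma^{*}\bfS\bfD\bfS^{*}$ (legitimate because $\Gamma^{*}P_{0}^{*}=0$ and $\Gamma^{*}\bfD_{0}P_{0}^{*}=0$). The surviving pieces assemble into the bracketed expressions $[\bfC D+\bfD_{0}C+\bfC_{0}\bfD_{0}P_{0}^{*}]P_{0}$ and $P_{0}^{*}[\Gamma^{*}\bfS\bfD\bfS^{*}+\Theta^{*}\bfC_{0}]$. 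The single genuinely nontrivial point, and the place where commutativity of $\AA$ is indispensable, is that the expansion produces $\bfC_{0}P_{0}^{*}\Theta^{*}$ whereas the claimed identity carries $P_{0}^{*}\Theta^{*}\bfC_{0}$; these coincide precisely because each $\Theta_{j}^{*}$ belongs to $\AA$ (the upper entries of $\bfT(D,\Theta)$ must lie in $\AA$) and therefore commutes with $C_{0}\in\AA$. The same commutativity underlies the identity $CD_{0}=\bfD_{0}C$ invoked repeatedly. I expect this adjoint-commutativity reconciliation to be the main obstacle; everything else is careful but mechanical collection of terms.
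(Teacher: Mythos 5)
Your proposal is correct and follows essentially the same route as the paper: both insert the resolution $\bfS^{*}\bfS+P_{n-1}^{*}P_{n-1}=\bfI$ between the two factors to obtain a product rule for $\de$, identify the correction term $\bfS\bfC P_{n-1}^{*}P_{n-1}\bfD\bfS^{*}$ as $\widetilde{\Gamma}\widetilde{D}^{*}$, substitute the displacement formulas from Lemma~\ref{Toeplitz}, and use commutativity of $\AA$ to move $\bfC_{0}$ past $\Theta^{*}$. The only (harmless) difference is bookkeeping: the paper first splits off the diagonals $\bfC_{0},\bfD_{0}$ and applies the identity to $\bfT(C,\Gamma)\bfT(D,\Theta)$, whereas you keep them inside via the corrected displacement $\de(\bfC)=CP_{0}+P_{0}^{*}\Gamma^{*}+\bfC_{0}P_{0}^{*}P_{0}$ and the symmetric three-term product rule.
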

	\begin{proof}
		Let $\widehat{\bfC}=\bfT(C,\Gamma)$ and $\widehat{\bfD}=\bfT(D,\Theta)$. Then we have 
		\begin{align*}
			\de(\bfCD)
			&=\de[(\widehat{\bfC}+\bfC_0)][(\widehat{\bfD}+\bfD_0)]\\
			&=\de[\widehat{\bfC}\widehat{\bfD}+\bfC_0\widehat{\bfD}+\bfD_0\widehat{\bfC}+\bfC_0\bfD_0]\\
			&=\de(\widehat{\bfC}\widehat{\bfD})+\de(\bfC_0\widehat{\bfD})+\de(\bfD_0\widehat{\bfC})+\de(\bfC_0\bfD_0).
		\end{align*}
		Since $\bfC_0,\bfD_0\in\DD_{n}\otimes\AA$, then $\bfS$ commute with $\bfC_0$ and $\bfD_0$ respectively,  therefore last equation above can be written as 
		\begin{equation}\label{eq2}
			\de(\bfCD)=\de(\widehat{\bfC}\widehat{\bfD})+\bfC_0\de(\widehat{\bfD})+\bfD_0\de(\widehat{\bfC})+\bfC_0\bfD_0\de(\bfI).
		\end{equation} 
		By Lemma \ref{Toeplitz}, there exist vectors  $D,\Theta\in\CC\otimes\AA$ such that, $\de(\widehat{\bfD})=DP_0+P_0^*\Theta^*$. Similarly $\de(\widehat{\bfC})=CP_0+P_0^*\Gamma^*$, with $C,\Gamma\in\CC\otimes\AA$. Also we have $\de(\bfI)=P_0^*P_0$. Then \eqref{eq2} becomes 
		\begin{align*}
			\de({\bfCD})&=\de(\widehat{\bfC}\widehat{\bfD})+\bfC_0[DP_0+P_0^*\Theta^*]+\bfD_0[CP_0+P_0^*\Gamma^*]+\bfC_0\bfD_0P_0^*P_0\\
			&=\de(\widehat{\bfC}\widehat{\bfD})+[\bfC_0D+\bfD_0C+\bfC_0\bfD_0P_0^*]P_0+P_0^*[\Theta^*\bfC_0+\Gamma^*\bfD_0]\label{eq:eq0}\tag{*}.
		\end{align*}
		Now by using the definition of $\de$ 
		\begin{align*}
			\de(\widehat{\bfC}\widehat{\bfD})
			&=\widehat{\bfC}\widehat{\bfD}-\bfS\widehat{\bfC}\widehat{\bfD}\bfS^*\\
			&=\widehat{\bfC}\widehat{\bfD}-\widehat{\bfC}\bfS\widehat{\bfD}\bfS^*+\widehat{\bfC}\bfS\widehat{\bfD}\bfS^*-\bfS\widehat{\bfC}[\bfS^*\bfS+P_{n-1}^*P_{n-1}]\widehat{\bfD}\bfS^*\\
			&=\widehat{\bfC}\de\widehat{\bfD}+\de\widehat{\bfC}(\bfS\widehat{\bfD}\bfS^*)-\bfS\widehat{\bfC}P_{n-1}^*P_{n-1}\widehat{\bfD}\bfS^*\\
			&=\widehat{\bfC}[DP_0+P_0^*\Theta^*]+[CP_0+P_0^*\Gamma^*]\bfS\widehat{\bfD}\bfS^*-\widetilde{\Gamma}\widetilde{D}^*.\label{eq:eq3}\tag{**}
		\end{align*}
		Since $P_0\bfS=0$, so the term $CP_0\bfS\widehat{\bfD}\bfS^*=0$. Also $\widehat{\bfC}P_0^*\Theta^*=C\Theta^*$, then (\ref{eq:eq3}) can be written as  
		\begin{align}\label{eq4}
			\de(\widehat{\bfC}\widehat{\bfD})=\widehat{\bfC}DP_0+P_0^*\Gamma^*\bfS\widehat{\bfD}\bfS^*+C\Theta^*-\widetilde{\Gamma}\widetilde{D}^*.
		\end{align}
		Combining \eqref{eq:eq0} and \eqref{eq4} we obtained 
		\begin{equation}\label{eq5}
			\de(\bfCD)=C\Theta^*-\widetilde{\Gamma}\tilde{D}^*+[\widehat{\bfC}D+\bfC_0D+\bfD_0C+\bfC_0\bfD_0P_0^*]P_0+P_0^*[\Gamma^*\bfS\widehat{\bfD}\bfS^*+\Theta^*\bfC_0+\Gamma^*\bfD_0].
		\end{equation}
		Note that $\widehat{\bfC}D+\bfC_0D=\bfC D$ and $\Gamma^*\bfS\widehat{\bfD}\bfS^*+\Gamma^*\bfD_0=\Gamma^*(\bfS\widehat{\bfD}\bfS^*+\bfD_0)=\Gamma^*\bfS\bfD\bfS^*$.
		Therefore \eqref{eq5} becomes $$\de(\mathbf{CD})=C\Theta^*-\widetilde{\Gamma}\tilde{D}^*+[\bfC D+\bfD_0C+\bfC_0\bfD_0P_0^*]P_0+P_0^*[\Gamma^*\bfS\bfD\bfS^*+\Theta^*\bfC_0].$$
	\end{proof}	
	The following result is the most important result of this section. 
	\begin{thm}\label{main}
		Let $A,\Omega, B, \Lambda, C, \Gamma, D $ and $\Theta$ be vectors in $\CC\otimes\AA$ with $0$ in the zeroth component . Let $\bfA=\bfT(A,\Omega)+\bfA_0$, $\bfB=\bfT(B,\Lambda)+\bfB_0$, $\bfC=\bfT(C,\Gamma)+\bfC_0$,  and $\bfD=\bfT(D,\Theta)+\bfD_0$, then
		\begin{itemize}
			\item [(i)] $\bfAB-\bfCD$ or, equivalently  $\bfT(A,\Omega)\bfT(B,\Lambda)-\bfT(C,\Gamma)\bfT(D,\Theta)\in\TT_n\otimes\AA$ if and only if $$A\Lambda^*-\widetilde\Omega\widetilde{B}^*=C\Theta^*-\widetilde{\Gamma}\widetilde{D}^*.$$		
			\item [(ii)]If $\bfAB-\bfCD\in\TT_n\otimes\AA$, then $\bfAB-\bfCD=0$ if and only if  \begin{equation}\label{zerothrelation}
				\bfA B+\bfB_0A+\bfA_0\bfB_0 P_0^*=\bfC D+\bfD_0C+\bfC_0\bfD_0 P_0^*,
			\end{equation} and 
			\begin{equation}\label{adjointrelation}
				\bfB^*\Omega+\bfA_0^*\Lambda+\bfA_0^*\bfB_0^*P_0^*=\bfD^*\Gamma+\bfC_0^*\Theta+\bfC_0^*\bfD_0P_0^*.
			\end{equation}
		\end{itemize}
	\end{thm}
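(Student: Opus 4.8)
The plan is to derive both parts from the displacement formula \eqref{Toeplitzproduct}, the Toeplitz criterion of Lemma~\ref{Toeplitz}, and the reconstruction identity $\bfA=\sum_{k=0}^{n-1}\bfS^k\de(\bfA)\bfS^{*k}$, which reduces every vanishing statement to a statement about a displacement.

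For (i) I would apply \eqref{Toeplitzproduct} once to $\bfCD$ and once to $\bfAB$ (the latter by substituting $(A,\Omega,B,\Lambda,\bfA_0,\bfB_0)$ for $(C,\Gamma,D,\Theta,\bfC_0,\bfD_0)$) and subtract, using linearity of $\de$, to obtain
\[
\de(\bfAB-\bfCD)=\big[(A\Lambda^*-\widetilde\Omega\widetilde B^*)-(C\Theta^*-\widetilde\Gamma\widetilde D^*)\big]+XP_0+P_0^*Y^*,
\]
where $X:=\bfA B+\bfB_0A+\bfA_0\bfB_0P_0^*-\bfC D-\bfD_0C-\bfC_0\bfD_0P_0^*\in\CC\otimes\AA$ and $Y^*:=\Omega^*\bfS\bfB\bfS^*+\Lambda^*\bfA_0-\Gamma^*\bfS\bfD\bfS^*-\Theta^*\bfC_0$ is a row in $\RR\otimes\AA$. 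The terms $XP_0$ and $P_0^*Y^*$ are supported on the first block-column and the first block-row respectively, whereas the bracketed term is an $n\times n$ block matrix whose zeroth block-row and block-column both vanish, since every one of $A,\Omega,B,\Lambda,C,\Gamma,D,\Theta$ has zeroth component $0$. By Lemma~\ref{Toeplitz}, $\bfAB-\bfCD$ is Toeplitz exactly when $\de(\bfAB-\bfCD)$ has the form $\widehat XP_0+P_0^*\widehat Y^*$, i.e.\ is supported on the first block-row and block-column; since the bracketed term has support disjoint from those, this forces it to be $0$, which is precisely $A\Lambda^*-\widetilde\Omega\widetilde B^*=C\Theta^*-\widetilde\Gamma\widetilde D^*$. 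The converse is immediate, because the vanishing of the bracket leaves $\de(\bfAB-\bfCD)=XP_0+P_0^*Y^*$.

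For (ii), assume $\bfAB-\bfCD$ is Toeplitz, so the bracket vanishes and $\de(\M)=XP_0+P_0^*Y^*$ for $\M:=\bfAB-\bfCD$. A block Toeplitz matrix is determined by its first block-row and first block-column, hence $\M=0$ iff $\M P_0^*=0$ and $\M^*P_0^*=0$. I would compute the first block-column of a product directly from \eqref{Toeplitzproduct}: right-multiplication by $P_0^*$ annihilates the interior block (whose zeroth column is $0$), and since the row-coefficient in \eqref{Toeplitzproduct} has vanishing zeroth entry (the trailing $\bfS^*$ kills the first summand and $\Theta_0=0$ the second) it returns exactly the $P_0$-coefficient; as moreover $\bfS^*P_0^*=0$ gives $\de(\,\cdot\,)P_0^*=(\,\cdot\,)P_0^*$, the first block-column of a product of two block Toeplitz matrices equals the $P_0$-coefficient of its displacement. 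Subtracting the two products yields $\M P_0^*=X$, so $\M P_0^*=0$ is precisely \eqref{zerothrelation}.

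Finally, $\M^*=\bfB^*\bfA^*-\bfD^*\bfC^*$, with $\bfA^*=\bfT(\Omega,A)+\bfA_0^*$, $\bfB^*=\bfT(\Lambda,B)+\bfB_0^*$, and similarly for $\bfC,\bfD$; this is again a difference of products of block Toeplitz matrices, now with entries in $\AA^*$, which is commutative because $\AA$ is. The same first-column computation applied to $\M^*$ gives $\M^*P_0^*=\bfB^*\Omega+\bfA_0^*\Lambda+\bfB_0^*\bfA_0^*P_0^*-\bfD^*\Gamma-\bfC_0^*\Theta-\bfD_0^*\bfC_0^*P_0^*$, which, using $\bfB_0^*\bfA_0^*=\bfA_0^*\bfB_0^*$ and $\bfD_0^*\bfC_0^*=\bfC_0^*\bfD_0^*$, is the left- minus right-hand side of \eqref{adjointrelation}; thus $\M^*P_0^*=0$ is \eqref{adjointrelation}. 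The main obstacle, and the only place where the commuting-entries hypothesis is truly used, is the bookkeeping at the $(0,0)$ block: the displacement admits several decompositions of the shape $XP_0+P_0^*Y^*$ that differ only there, and the diagonal cross terms $\bfA_0\bfB_0$, $\bfB_0\bfA_0$ (and their adjoint counterparts) must coincide for the two stated conditions to close up into the clean first-column equalities $\M P_0^*=0$ and $\M^*P_0^*=0$ — which is exactly the commutativity of the diagonal blocks in $\AA$ and in $\AA^*$.
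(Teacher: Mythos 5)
Your proof is correct, and part (i) is essentially identical to the paper's: both subtract the two instances of \eqref{Toeplitzproduct}, note that the ``interior'' term $A\Lambda^*-\widetilde{\Omega}\widetilde{B}^*-C\Theta^*+\widetilde{\Gamma}\widetilde{D}^*$ has vanishing first block row and column while the other two terms live exactly on that row and column, and invoke Lemma~\ref{Toeplitz}. In part (ii) you take a genuinely different (though equally displacement-based) route. The paper sets $\de(\bfAB-\bfCD)=0$, reads off the column condition (which is \eqref{zerothrelation}) and the row condition $\Omega^*\bfS\bfB\bfS^*+\Lambda^*\bfA_0=\Gamma^*\bfS\bfD\bfS^*+\Theta^*\bfC_0$, and then reconciles the adjoint of the latter with \eqref{adjointrelation} by checking that their difference is, up to adjoints, the zeroth component of \eqref{zerothrelation}. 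You instead use that a Toeplitz matrix vanishes iff its first block column and first block row do, and identify \eqref{zerothrelation} with $(\bfAB-\bfCD)P_0^*=0$ and \eqref{adjointrelation} with $(\bfAB-\bfCD)^*P_0^*=0$, the latter by applying the first-column computation to the adjoint factorization $\bfB^*\bfA^*-\bfD^*\bfC^*$ (legitimate, as you note, because $\AA^*$ is again commutative). Your version buys a transparent interpretation of the two conditions --- they are literally the equality of the first block columns and of the first block rows of $\bfAB$ and $\bfCD$ --- at the price of redoing the product formula for the adjoints; the paper avoids that second computation but needs the slightly fiddly reconciliation step. Your closing remark correctly locates the role of commutativity (e.g.\ in $\bfT(A,\Omega)\bfB_0P_0^*=\bfB_0\diamond A$ and $\bfB_0^*\bfA_0^*=\bfA_0^*\bfB_0^*$), which the paper leaves implicit.
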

	\begin{proof}
		By Lemma \ref{Toeplitz},
		\begin{align*}
			\de(\bfAB)-\de(\bfCD)&=A\Lambda^*-\widetilde\Omega\widetilde{B}^*-C\Theta^*+\widetilde{\Gamma}\widetilde{D}^*
			+\left[\bfA B+\bfB_0 A+\bfA_0\bfB_0P_0^* -\bfC D -\bfD_0 C-\bfC_0\bfD_0 P_0^*\right]P_0\\
			&+P_0^*\left[\Omega^*\bfS\bfB\bfS^*+\Lambda^*\bfA_0-\Gamma^*\bfS\bfD\bfS^*-\Theta^*\bfC_0\right],
		\end{align*}
		as a consequence, the first four terms are block matrices with 0 on the first row and the first column. On the other hand, the fifth term has nonzero entries only on the first column, and the sixth only on the first row. By Lemma \ref{Toeplitz}, $\bfAB-\bfCD\in\TT_n\otimes\AA$ if and only if  
		$A\Lambda^*-\widetilde\Omega\widetilde{B}^*-C\Theta^*+\widetilde{\Gamma}\widetilde{D}^*=0$, which is the required relation. \\
		(ii) If $\bfAB-\bfCD\in\TT_n\otimes\AA$, then $\bfAB=\bfCD$ if and only if $\de(\bfAB-\bfCD)=0$.  The latter equation holds, i.e., $\bfAB=\bfCD$ if and only if
		\[
		\bfA B+\bfB_0 A+\bfA_0\bfB_0P_0^* =\bfC D +\bfD_0 C+\bfC_0\bfD_0P_0^* 
		\]
		\[
		\Omega^*\bfS\bfB\bfS^*+\Lambda^*\bfA_0=\Gamma^*\bfS\bfD\bfS^*+\Theta^*\bfC_0
		\]
		or 
		\begin{equation}\label{adj}
			\bfS\bfB^*\bfS^*\Omega+\bfA_0^* \Lambda=\bfS\bfD^*\bfS^*\Gamma+\bfC_0^*\Theta.
		\end{equation}
		However \eqref{zerothrelation} and \eqref{adjointrelation} are equivalent to above equations . Since the difference between \eqref{adjointrelation} and \eqref{adj}  \[
		\de(\bfB^*)\Omega+\bfA_0^*\bfB_0^*P_0^*=\de(\bfD^*)\Gamma+\bfC_0^*\bfD_0^*P_0^*, 
		\]
		which is upto  the  adjoint is the zeroth component relation of \eqref{zerothrelation}.
	\end{proof}
	The below written results gather some consequences of Theorem \ref{main}.
	\begin{cor}\label{Alg}
		If $\bfA=\bfT(A,\Omega)+\bfA_0,\bfB=\bfT(B,\Lambda)+\bfB_0$, then $\bfAB\in\TT_{n}\otimes\AA$  if and only if $A\Lambda^*=\widetilde\Omega\widetilde{B}^*$. 
	\end{cor}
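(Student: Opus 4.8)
The plan is to read this off directly from Theorem~\ref{main}(i) by the degenerate choice $\bfC=\bfD=0$. Concretely, I would apply the theorem with $C=\Gamma=D=\Theta=0$ and $\bfC_0=\bfD_0=0$; these zero vectors of $\CC\otimes\AA$ trivially have $0$ in their zeroth component, so the hypotheses of Theorem~\ref{main} are met. With this choice $\bfC=\bfT(C,\Gamma)+\bfC_0=0$ and likewise $\bfD=0$, so the product $\bfCD=0$ lies in $\TT_n\otimes\AA$ automatically, and the difference $\bfAB-\bfCD$ is just $\bfAB$.

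Next I would evaluate the right-hand side of the characterizing identity in Theorem~\ref{main}(i). Since $C=0$ and $\Gamma=0$, and since $\widetilde{\Gamma}$ is obtained from $\Gamma$ merely by reversing and adjointing its entries, we have $\widetilde{\Gamma}=0$; hence $C\Theta^*-\widetilde{\Gamma}\widetilde{D}^*=0$. Theorem~\ref{main}(i) then states that $\bfAB-\bfCD=\bfAB$ belongs to $\TT_n\otimes\AA$ if and only if
\[
A\Lambda^*-\widetilde\Omega\widetilde{B}^*=0,
\]
which is exactly the asserted condition $A\Lambda^*=\widetilde\Omega\widetilde{B}^*$. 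This closes both implications simultaneously, since the equivalence in Theorem~\ref{main}(i) is stated as an ``if and only if.''

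I do not expect any genuine obstacle here: the corollary is a pure specialization, and the only point requiring a moment's care is the legitimacy of inserting the zero vectors (so that the ambient diagonal parts $\bfC_0,\bfD_0$ also vanish) and the observation that $\Gamma=0$ forces $\widetilde{\Gamma}=0$. As an alternative, essentially equivalent route, one could instead invoke the displacement formula of the preceding Lemma for the single product $\bfAB$ directly: setting $\de(\bfAB)$ equal to its value and using Lemma~\ref{Toeplitz} to characterize membership in $\TT_n\otimes\AA$ by the vanishing of the ``off-axis'' term $A\Lambda^*-\widetilde\Omega\widetilde{B}^*$. Either way the computation is immediate, so I would present the short deduction from Theorem~\ref{main}(i) as the cleanest argument.
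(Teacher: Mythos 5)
Your specialization of Theorem~\ref{main}(i) with $C=\Gamma=D=\Theta=0$ and $\bfC_0=\bfD_0=0$ is correct and is exactly how the paper intends the corollary to follow (it is stated without proof as an immediate consequence of that theorem). No gaps; the observation that the zero vectors satisfy the hypotheses and force $C\Theta^*-\widetilde{\Gamma}\widetilde{D}^*=0$ is all that is needed.
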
 
	\begin{cor}
		Let $\bfA=\bfT(A,\Omega)+\bfA_0,\bfB=\bfT(B,\Lambda)+\bfB_0$, then $\bfAB\in\TT_{n}\otimes\AA$  if and only if $\bfB\bfA\in\TT_{n}\otimes\AA$.	
	\end{cor}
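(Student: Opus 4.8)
The plan is to turn each membership condition into a vector equation by invoking Corollary~\ref{Alg}, and then to show that the two resulting equations are equivalent, with the commutativity of $\AA$ doing the essential work. Corollary~\ref{Alg} gives at once that $\bfA\bfB\in\TT_n\otimes\AA$ if and only if $A\Lambda^*=\widetilde\Omega\widetilde B^*$. Applying the very same corollary to the product $\bfB\bfA$ — i.e.\ with the ordered pair of factors $(\bfB,\bfA)$ in place of $(\bfA,\bfB)$, so that the lower/upper data of the first factor are $(B,\Lambda)$ and those of the second are $(A,\Omega)$ — yields that $\bfB\bfA\in\TT_n\otimes\AA$ if and only if $B\Omega^*=\widetilde\Lambda\widetilde A^*$. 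Thus everything reduces to proving the equivalence
\[
A\Lambda^*=\widetilde\Omega\widetilde B^*\quad\Longleftrightarrow\quad B\Omega^*=\widetilde\Lambda\widetilde A^*.
\]

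The key step, and the one I expect to carry the whole argument, is a reflection identity for these outer products. Let $\rho$ be the involution of $\{0,1,\dots,n-1\}$ given by $\rho(0)=0$ and $\rho(i)=n-i$ for $1\le i\le n-1$, so that by definition of the tilde operation $(\widetilde u)_i=u_{\rho(i)}^*$ for every $u\in\CC\otimes\AA$ with zeroth entry $0$, and let $\sigma$ be the associated reflection of $n\times n$ block matrices, $\sigma(\bfM)_{ij}=\bfM_{\rho(j),\rho(i)}$. Computing entrywise for $u,v\in\CC\otimes\AA$ with $u_0=v_0=0$ one finds $(\widetilde u\,\widetilde v^*)_{ij}=u_{\rho(i)}^*\,v_{\rho(j)}$, whereas $\sigma(vu^*)_{ij}=v_{\rho(j)}\,u_{\rho(i)}^*$. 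Because all entries of $u$, $v$, and their adjoints lie in the \emph{commutative} algebra $\AA$, these two expressions coincide, which gives
\[
\widetilde u\,\widetilde v^*=\sigma(vu^*)\qquad(u,v\in\CC\otimes\AA,\ u_0=v_0=0).
\]
This is precisely where commutativity of $\AA$ is indispensable; the only real care needed is in keeping the index bookkeeping of $\rho$ straight and in checking that the zeroth row and column vanish on both sides (which they do, since $\rho(0)=0$ and $u_0=v_0=0$).

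With this identity the equivalence is immediate. Taking $u=\Omega,\ v=B$ gives $\widetilde\Omega\widetilde B^*=\sigma(B\Omega^*)$, so the first equation reads $A\Lambda^*=\sigma(B\Omega^*)$; applying the involution $\sigma$ (and noting $\sigma^2=\mathrm{id}$ because $\rho^2=\mathrm{id}$) to both sides turns it into $\sigma(A\Lambda^*)=B\Omega^*$. Taking $u=\Lambda,\ v=A$ in the identity gives $\sigma(A\Lambda^*)=\widetilde\Lambda\widetilde A^*$, so this is exactly $\widetilde\Lambda\widetilde A^*=B\Omega^*$, the criterion for $\bfB\bfA\in\TT_n\otimes\AA$. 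Hence $\bfA\bfB\in\TT_n\otimes\AA$ if and only if $\bfB\bfA\in\TT_n\otimes\AA$, as claimed. (If one prefers to avoid naming $\sigma$, the same conclusion follows by writing the first equation entrywise as $A_i\Lambda_j^*=\Omega_{n-i}^*B_{n-j}$, substituting $i\mapsto n-j$ and $j\mapsto n-i$, and commuting the two factors to arrive at $B_i\Omega_j^*=\Lambda_{n-i}^*A_{n-j}$.)
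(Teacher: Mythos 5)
Your proposal is correct and follows essentially the same route as the paper: both reduce each membership to the vector equation from Corollary~\ref{Alg} and then show $A\Lambda^*=\widetilde\Omega\widetilde B^*$ is equivalent to $B\Omega^*=\widetilde\Lambda\widetilde A^*$ by an index reflection plus commutativity of the entries (and their adjoints) in $\AA$. The entrywise computation you relegate to your final parenthesis is, in fact, exactly the paper's own proof; your packaging of it as the involution $\sigma$ is only a cosmetic difference.
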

	\begin{proof}
		By Theorem \ref{main}, $\bfAB\in\TT_{n}\otimes\AA$ if and only if $A\Lambda^*=\widetilde{\Omega}\widetilde{B}^*
		$ if and only if $B\Omega^*=(B_i\Omega_j)_{i,j}=((A_{n-j-1}\Lambda_{n-i-1})^*)_{i,j}=(\Lambda_{n-i-1}^*A_{n-j-1}^*)_{i,j}=\widetilde{\Lambda}\widetilde{A}^*$ if and only if $\bfB\bfA\in\TT_{n}\otimes\AA$.
	\end{proof}
	The following results is already proved in {\cite{MAKDT}}.
	\begin{thm}\label{conj} 
		Let $\bfA=\bfT(A,\Omega)+\bfA_0$, $\bfB=\bfT(B,\Lambda)+\bfB_0$,  if $\bfAB\in\TT_{n}\otimes\AA$, then $\bfAB=\bfB\bfA$. 
	\end{thm}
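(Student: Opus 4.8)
The plan is to play the two products $\bfAB$ and $\bfB\bfA$ off against each other using Theorem~\ref{main}, and to reduce the desired equality to two vector identities that are forced by the commutativity of $\AA$.

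First I would record that $\bfB\bfA\in\TT_n\otimes\AA$ as well: this is exactly the content of the corollary immediately preceding this theorem (that $\bfAB\in\TT_n\otimes\AA$ if and only if $\bfB\bfA\in\TT_n\otimes\AA$), since $\bfAB\in\TT_n\otimes\AA$ by hypothesis. Next I would check that the commutator $\bfAB-\bfB\bfA$ is itself block Toeplitz. Applying Theorem~\ref{main}(i) with $\bfC=\bfB$ and $\bfD=\bfA$ (so that $C=B$, $\Gamma=\Lambda$, $D=A$, $\Theta=\Omega$), membership $\bfAB-\bfB\bfA\in\TT_n\otimes\AA$ is equivalent to
\[
A\Lambda^*-\widetilde\Omega\widetilde{B}^*=B\Omega^*-\widetilde\Lambda\widetilde{A}^*.
\]
By Corollary~\ref{Alg} the left-hand side is $0$ because $\bfAB\in\TT_n\otimes\AA$, and the right-hand side is $0$ because $\bfB\bfA\in\TT_n\otimes\AA$; hence the identity holds and $\bfAB-\bfB\bfA\in\TT_n\otimes\AA$.

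With membership secured I would invoke Theorem~\ref{main}(ii), again with $\bfC=\bfB$ and $\bfD=\bfA$: since $\bfAB-\bfB\bfA$ is Toeplitz, $\bfAB=\bfB\bfA$ holds if and only if \eqref{zerothrelation} and \eqref{adjointrelation} are satisfied. Writing $\widehat\bfA=\bfT(A,\Omega)$ and $\widehat\bfB=\bfT(B,\Lambda)$, I would use that $\bfA_0,\bfB_0\in\DD_n\otimes\AA$ commute --- their diagonal blocks $A_0,B_0$ lie in the commutative algebra $\AA$, so $\bfA_0\bfB_0=\bfB_0\bfA_0$. After this cancellation the $P_0^*$-terms disappear and, substituting $\bfA=\widehat\bfA+\bfA_0$ and $\bfB=\widehat\bfB+\bfB_0$, relation \eqref{zerothrelation} collapses to the single vector identity $\widehat\bfA B=\widehat\bfB A$. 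The analogous manipulation turns \eqref{adjointrelation} into $\widehat{\bfB^*}\Omega=\widehat{\bfA^*}\Lambda$, modulo a zeroth-component equation that coincides with the adjoint of the zeroth component of \eqref{zerothrelation} and is therefore automatic once the latter is known.

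It then remains to verify the two vector identities $\widehat\bfA B=\widehat\bfB A$ and $\widehat{\bfB^*}\Omega=\widehat{\bfA^*}\Lambda$. In each of them the strictly lower-triangular contribution of one side matches that of the other after reversing the summation index, simply because entries of $\AA$ commute; the strictly upper-triangular contributions are matched by substituting the componentwise form of Corollary~\ref{Alg}, namely $A_i\Lambda_j^*=\Omega_{n-i}^*B_{n-j}$ for $1\le i,j\le n-1$ (together with its adjoint), and then commuting the resulting factors past one another. I expect this index bookkeeping --- and in particular the careful tracking of the diagonal blocks $\bfA_0\bfB_0$ against $\bfB_0\bfA_0$ and of their adjoints in the zeroth slot, where \eqref{zerothrelation} and \eqref{adjointrelation} interact --- to be the only genuine obstacle; everything else is dictated by the commutativity hypothesis. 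Once both identities are confirmed, Theorem~\ref{main}(ii) yields $\bfAB=\bfB\bfA$, as desired.
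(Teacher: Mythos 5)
Your argument is correct, but note first that the paper supplies no proof of this statement at all: it is recorded with the remark that the result ``is already proved in'' \cite{MAKDT}. So there is no internal proof to compare against, and what you have written is a genuinely self-contained derivation from the paper's own machinery (Theorem~\ref{main} and Corollary~\ref{Alg}), which is arguably preferable to an external citation. Your reduction is sound: with $\bfC=\bfB$, $\bfD=\bfA$ in Theorem~\ref{main}(ii), the terms $\bfA_0\bfB_0P_0^*$ and $\bfB_0\bfA_0P_0^*$ cancel because $\AA$ is commutative, and \eqref{zerothrelation}, \eqref{adjointrelation} do collapse to $\bfT(A,\Omega)B=\bfT(B,\Lambda)A$ and $\bfT(\Lambda,B)\Omega=\bfT(\Omega,A)\Lambda$. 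The deferred index bookkeeping also checks out: for the first identity the lower-triangular contributions $\sum_{q<p}A_{p-q}B_q$ and $\sum_{q<p}B_{p-q}A_q$ agree by reindexing and commutativity, while for the upper-triangular part the componentwise relation of Corollary~\ref{Alg} gives $\Omega_{q-p}^*B_q=A_{n-q+p}\Lambda_{n-q}^*$, and the substitution $q\mapsto n-q+p$ converts $\sum_{q>p}\Omega_{q-p}^*B_q$ into $\sum_{q>p}A_q\Lambda_{q-p}^*=\sum_{q>p}\Lambda_{q-p}^*A_q$, as required; the second identity is the adjoint computation. Two small remarks. First, your intermediate verification that $\bfAB-\bfB\bfA\in\TT_n\otimes\AA$ via Theorem~\ref{main}(i) is superfluous: once the corollary preceding the theorem gives $\bfB\bfA\in\TT_n\otimes\AA$, the difference of two block Toeplitz matrices is automatically block Toeplitz. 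Second, the last commutation step moves an element of $\AA$ past the \emph{adjoint} of an element of $\AA$ (e.g.\ $A_q$ past $\Lambda_{q-p}^*$); this is legitimate under the paper's convention that every block entry of a matrix in $\TT_n\otimes\AA$ --- including the entries $\Omega_k^*$ above the diagonal --- lies in the commutative algebra $\AA$, but it is the one point in the bookkeeping where more than the commutativity of the $A_k$, $B_k$ themselves is used, and it deserves to be said explicitly.
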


	\section{Commutants of $\bfS$, $\bfS^*$, $\bfS_X$ and $\bfS_X^*$ .}
	We start this section with the following proposition.
	
	\begin{prop} $\bfA \in \MM_n\otimes \AA$ is Toeplitz if and only if there exist $A$, $B\in\CC\otimes \AA$ such that $\bfA-\bfS_X \bfA \bfS^*_X= AP_0 + P_0^*B^*.$
	\end{prop}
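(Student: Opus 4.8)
The plan is to compare the modified displacement $\bfA-\bfS_X\bfA\bfS_X^*$ with the ordinary displacement $\de(\bfA)=\bfA-\bfS\bfA\bfS^*$ and to show that the two differ only by a matrix supported on the first row and first column; the Proposition will then fall out of Lemma~\ref{Toeplitz}. First I would write $\bfS_X=\bfS+E$, where $E:=X\diamond P_0^*P_{n-1}$ is the $n\times n$ block matrix carrying $X$ in its top-right block $(0,n-1)$ and zeros elsewhere. Expanding the product gives
\[
\bfS_X\bfA\bfS_X^*=\bfS\bfA\bfS^*+\bfS\bfA E^*+E\bfA\bfS^*+E\bfA E^*,
\]
so that
\[
\bfA-\bfS_X\bfA\bfS_X^*=\de(\bfA)-\bigl(\bfS\bfA E^*+E\bfA\bfS^*+E\bfA E^*\bigr).
\]

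The crux is to verify that, for every $\bfA\in\MM_n\otimes\AA$, each of the three correction terms is supported inside the union of the first row and the first column, i.e. has the same shape as $AP_0+P_0^*B^*$. Since $E$ has its only nonzero block in position $(0,n-1)$, the product $E\bfA$ is supported in row $0$ and $\bfA E^*$ is supported in column $0$. Right multiplication by $\bfS^*$ cannot move a matrix out of its rows, and left multiplication by $\bfS$ cannot move it out of its columns; hence $E\bfA\bfS^*$ lives in row $0$, $\bfS\bfA E^*$ lives in column $0$, and $E\bfA E^*$ is supported at the single corner block $(0,0)$. A short bookkeeping check, using $X\in\AA^\prime$ together with the closure properties of $\AA$, shows that these surviving blocks again lie in $\AA$. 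Consequently $\bigl(\bfA-\bfS_X\bfA\bfS_X^*\bigr)-\de(\bfA)$ is a block matrix with entries from $\AA$ supported only on the first row and first column.

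To finish, I would note that a matrix $M\in\MM_n\otimes\AA$ has the form $AP_0+P_0^*B^*$ with $A,B\in\CC\otimes\AA$ precisely when it is supported on the first row and first column: given such an $M$ one reads off $A$ as its first column and $B^*$ as its first row (assigning the corner block $(0,0)$ to $A$ and taking $B_0=0$), while the converse is immediate. Because $\de(\bfA)$ and $\bfA-\bfS_X\bfA\bfS_X^*$ differ by a matrix of exactly this shape, one of them has the form $AP_0+P_0^*B^*$ if and only if the other does. By Lemma~\ref{Toeplitz}, $\bfA$ is Toeplitz if and only if $\de(\bfA)=AP_0+P_0^*\Omega^*$ for some vectors, i.e. iff $\de(\bfA)$ has this shape; combining the two equivalences yields that $\bfA$ is Toeplitz if and only if $\bfA-\bfS_X\bfA\bfS_X^*=AP_0+P_0^*B^*$ for some $A,B\in\CC\otimes\AA$, as claimed.

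The main obstacle I anticipate is the support-and-closure bookkeeping for the three correction terms: one must pin down exactly which blocks of $\bfS\bfA E^*$, $E\bfA\bfS^*$ and $E\bfA E^*$ can be nonzero and confirm that they remain in $\AA$, and this is the only place where the hypothesis $X\in\AA^\prime$ is genuinely used. Once that is settled the argument is purely formal, the conceptual point being that replacing $\bfS$ by $\bfS_X$ perturbs the displacement only within the first row and first column, which is exactly the region that $AP_0+P_0^*B^*$ already occupies.
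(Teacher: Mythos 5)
Your proposal is correct and takes essentially the same route as the paper: both write $\bfS_X=\bfS+X\diamond P_0^*P_{n-1}$, expand the product, observe that the correction terms to the displacement are supported on the first block row and column, and conclude via Lemma~\ref{Toeplitz}. The only caveat --- one the paper's own proof shares --- is that the ``bookkeeping check'' that the surviving corner blocks lie in $\AA$ itself (rather than merely in the algebra generated by $\AA$ and $X\in\AA^\prime$) is asserted rather than verified.
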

	\begin{proof}By Lemma 2.2, $\bfA \in \MM_n\otimes \AA$  is Toeplitz if and only if there exist vectors $A'$, $\Omega'\in\CC\otimes \AA$ such that $$\bfA-\bfS\bfA\bfS^*= A'P_0 + P_0^*(\Omega')^*.$$
		if and only if $$\bfA-(\bfS_X-X\diamond P_0^*P_{n-1})\bfA(\bfS^*_X-P_{n-1}^*P_0\diamond X^*)= A'P_0 + P_0^*(\Omega')^*,$$
		if and only if $$\bfA-\bfS_X\bfA \bfS^*_X=-\bfS_X \bfA P_{n-1}^*P_0\diamond X^*- X\diamond P_0^*P_{n-1}\bfA \bfS^*_X+ X\diamond P_0^*P_{n-1}\bfA P_{n-1}^*P_0\diamond X^*+A'P_0 + P_0^*(\Omega')^*,$$
		if and only if \begin{align*}
			\bfA-\bfS_X\bfA\bfS^*_X&=\left[ A'-\bfS_X \bfA X^* \diamond P_{n-1}^*\right] P_0 +P_0^*[ (\Omega')^*- P_{n-1}\diamond X\bfA \bfS^*_X
			+X\diamond P_{n-1}\bfA P_{n-1}^*P_\diamond X^*],\\
			&=A P_0+P_0^*B^*,\end{align*}
		where $A=A'-\bfS_X \bfA X^* \diamond P_{n-1}^*$ and $B=\Omega'- \bfS_X\bfA^*X^*\diamond P_{n-1}^*+X\diamond P_{0}^* P_{n-1}\bfA^*P_{n-1}^*\diamond X^*.$
		
	\end{proof}
	\begin{rmk} Since $P_{n-1}\bfS^*=0$, then 
		\begin{eqnarray*}
			I-\bfS_X\bfS^*_X&=& \bfI-(\bfS+X\diamond P_0^*P_{n-1})(\bfS^*+P_{n-1}^*P_0 \diamond X^*)\\
			&=& \bfI-\bfS\bfS^*-\bfS P_{n-1}^*P_0\diamond X^*-X\diamond P_0^*P_{n-1}\bfS^*-X\diamond P_0^*P_{n-1}P_{n-1}^*P_0\diamond X^*\\
			&=& P_0^*P_{0}-\bfS X^*\diamond P_{n-1}^*P_0 -P_0^*P_{n-1}\diamond X\bfS^*-X\diamond P_0^*P_{n-1}P_{n-1}^*P_0\diamond X^*\\
			&=& P_0^*P_{0} -P_0^*P_{n-1}\diamond X\bfS^*-\bfS_XP_{n-1}^*P_0\diamond X^*\\
			&=& P_0^*P_{0} -\bfS_XP_{n-1}^*P_0\diamond X^*.
		\end{eqnarray*}
		
	\end{rmk}
	\begin{rmk} 
		$\bfS,\bfS_X\in \TT_n\otimes \AA$, with $\bfS = \bfT(P_1,0)$ and $\bfS_X = \bfT(P_1,X^*\diamond\widetilde{P_1})$.
	\end{rmk}
	The following result characterized lower (upper) triangular block Toeplitz matrices among all $n\times n$ block matrices with entries from $\AA$.
	\begin{thm}\label{commutantS}
		If $\textbf{A} \in \mathcal{M}_n\otimes \mathcal{A}$, then the following hold:
		\begin{enumerate} 
			\item[(i)]   $\bfA\bfS=\bfS\bfA$  if and only if $\bfA = \bfT(A,0)+\bfA_0$.
			\item[(ii)] $\bfA\bfS^*=\bfS^*\bfA$ if and only if $ \bfA = \bfT(0,\Omega)+\bfA_0$.
		\end{enumerate}
	\end{thm}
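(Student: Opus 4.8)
The plan is to reduce both statements to the displacement normal form of Lemma~\ref{Toeplitz}, exploiting the two boundary identities $\bfS\bfS^*=\bfI-P_0^*P_0$ (this is $\de(\bfI)=P_0^*P_0$, recorded in Section~2) and $\bfS^*\bfS=\bfI-P_{n-1}^*P_{n-1}$ (the near-isometry relation already used in Lemma~3.2). The underlying idea is that commuting with $\bfS$ collapses the displacement $\de(\bfA)=\bfA-\bfS\bfA\bfS^*$ onto a single block-column, while commuting with $\bfS^*$ collapses it onto a single block-row, and Lemma~\ref{Toeplitz} then reads off the triangular structure directly.

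For the forward implication of (i) I would feed $\bfS\bfA=\bfA\bfS$ into the displacement: then $\bfS\bfA\bfS^*=\bfA\bfS\bfS^*$, so $\de(\bfA)=\bfA-\bfA\bfS\bfS^*=\bfA(\bfI-\bfS\bfS^*)=\bfA P_0^*P_0$. Since $E:=\bfA P_0^*P_0=(\bfA P_0^*)P_0$ is already of the form $AP_0+P_0^*\Omega^*$ with $A=\bfA P_0^*\in\CC\otimes\AA$ and $\Omega=0$, Lemma~\ref{Toeplitz} shows $\bfA$ is block Toeplitz; and the absence of the $P_0^*\Omega^*$ summand (equivalently, the vanishing of $E$ off its zeroth block-column, which is exactly the block-row that carries the upper-triangular data $\Omega^*$) forces $\Omega=0$, so $\bfA=\bfT(A,0)+\bfA_0$.

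For the converse of (i) I would run this backwards. From $\bfA=\bfT(A,0)+\bfA_0$, Lemma~\ref{Toeplitz} gives $\de(\bfA)=AP_0$, i.e. $\bfA-\bfS\bfA\bfS^*=AP_0$; multiplying on the right by $\bfS$ and using $P_0\bfS=0$ yields $\bfA\bfS=\bfS\bfA\bfS^*\bfS$. The only discrepancy between this and the desired $\bfA\bfS=\bfS\bfA$ is the factor $\bfS^*\bfS=\bfI-P_{n-1}^*P_{n-1}$, so it remains to verify that the boundary term $\bfS\bfA P_{n-1}^*P_{n-1}$ vanishes. This is the one place where I expect genuine care is needed: the term is governed by the last block-column $\bfA P_{n-1}^*$, which for a lower-triangular Toeplitz matrix is concentrated in its bottom entry, and $\bfS$ kills that entry because the shift pushes the last coordinate off the matrix ($\bfS P_{n-1}^*=0$). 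With this the unwanted term drops and $\bfA\bfS=\bfS\bfA$.

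Part (ii) I would treat in an entirely symmetric fashion, using $\bfS^*$ in place of $\bfS$: from $\bfS^*\bfA=\bfA\bfS^*$ one gets $\bfS\bfA\bfS^*=\bfS\bfS^*\bfA=(\bfI-P_0^*P_0)\bfA$, whence $\de(\bfA)=P_0^*P_0\bfA$ is supported in its zeroth block-row; matching against $AP_0+P_0^*\Omega^*$ now forces $A=0$ and gives $\bfA=\bfT(0,\Omega)+\bfA_0$, the converse following as before with the boundary term now controlled by $P_{n-1}\bfS^*=0$. Alternatively, since $\bfA\bfS^*=\bfS^*\bfA$ is equivalent to $\bfA^*\bfS=\bfS\bfA^*$, one may simply apply part (i) to $\bfA^*$ and invoke $\bfT(A,\Omega)^*=\bfT(\Omega,A)$ together with the fact that the adjoint of a diagonal block Toeplitz matrix is again diagonal block Toeplitz, the adjoint merely interchanging the lower- and upper-triangular roles.
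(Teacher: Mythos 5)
Your proposal is correct. The forward direction of (i) is exactly the paper's argument: from $\bfA\bfS=\bfS\bfA$ one gets $\de(\bfA)=\bfA(\bfI-\bfS\bfS^*)=\bfA P_0^*P_0$, and Lemma~\ref{Toeplitz} then forces $\Omega=0$ because the displacement is supported in the zeroth block-column. Where you diverge is the converse: the paper deduces $\bfA\bfS=\bfS\bfA$ by observing that $\bfS=\bfT(P_1,0)$ is itself block Toeplitz, checking the criterion of Corollary~\ref{Alg} (trivially satisfied since both upper-triangular vectors vanish) to conclude $\bfA\bfS\in\TT_n\otimes\AA$, and then invoking Theorem~\ref{conj} (quoted from an earlier paper, not proved here) to upgrade ``$\bfA\bfS$ is Toeplitz'' to ``$\bfA$ and $\bfS$ commute.'' Your converse instead runs the displacement identity backwards: $\de(\bfA)=AP_0$, right-multiplication by $\bfS$ with $P_0\bfS=0$, and the boundary cancellation $\bfS\bfA P_{n-1}^*P_{n-1}=0$ coming from the fact that the last block-column of a lower-triangular block Toeplitz matrix sits in the kernel of $\bfS$. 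This is a genuinely more elementary and self-contained route --- it uses nothing beyond Lemma~\ref{Toeplitz} and the shift relations, whereas the paper's converse rests on the external Theorem~\ref{conj}; the paper's version, on the other hand, makes the statement a clean corollary of the general product machinery of Section~3. Your treatment of (ii), either by the symmetric computation with $\bfS^*\bfS=\bfI-P_{n-1}^*P_{n-1}$ or by passing to adjoints via $\bfT(A,\Omega)^*=\bfT(\Omega,A)$, is also fine; the paper leaves (ii) to the reader.
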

	\begin{proof} 
		We will give the proof only for (i) and left (ii) as an easy exercise for the reader. 
		
		(i) Suppose that  $\bfA\bfS=\bfS\bfA$, then $\de(\bfA)=\bfA\de(\bfI)=\bfA P_0^*P_{0}$. By Lemma \ref{Toeplitz},  $\bfA \in \TT_n\otimes \AA$, with $\bfA = \bfT(A,0)+\bfA_0$. For the converse, let   $\bfA=\bfT(A,0)+\bfA_0$, also $\bfS \in \TT_n\otimes \AA$, then by Corollary \ref{Alg} and Theorem \ref{conj}, it is immediate that $\bfA \bfS=\bfS\bfA$.
	\end{proof}
	
	\begin{thm}\label{commutantSX}
		If $\textbf{A} \in \mathcal{M}_n\otimes \mathcal{A}$, then 
		\begin{enumerate} 
			\item[(i)]$\bfA\bfS_X=\bfS_X\bfA$ if and only if $\bfA = \bfT(A, X^*\diamond \widetilde{A})+\bfA_0$.
			\item[(ii)] $\bfA\bfS_X^*=\bfS_X^*\bfA$ if and only if $\bfA = \bfT(X^*\diamond \widetilde{A},A )+\bfA_0^*$.
		\end{enumerate}
	\end{thm}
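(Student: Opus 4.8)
The plan is to handle the two directions of (i) with different tools and then obtain (ii) by passing to adjoints.

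For the sufficiency in (i), I would exploit the observation recorded above that $\bfS_X=\bfT(P_1,X^*\diamond\widetilde{P_1})$ is itself Toeplitz. Writing the candidate as $\bfA=\bfT(A,\Omega)+\bfA_0$ with $\Omega=X^*\diamond\widetilde A$, both factors lie in $\TT_n\otimes\AA$, so I would invoke Corollary \ref{Alg}: the product $\bfA\bfS_X$ is Toeplitz exactly when $A\Lambda^*=\widetilde\Omega\widetilde B^*$, where here $B=P_1$ and $\Lambda=X^*\diamond\widetilde{P_1}$. Since $\widetilde{P_1}=P_{n-1}$, both sides of this identity are supported on the last block-column, so the condition collapses to the single entrywise equality $A_iX=(\widetilde\Omega)_i$. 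The key algebraic fact I would isolate and prove first is $\widetilde{X^*\diamond\widetilde A}=X\diamond A$; this is where the hypothesis $X\in\AA^\prime$ is used, through $XA_i=A_iX$. Granting it, $(\widetilde\Omega)_i=A_iX$ holds automatically, so $\bfA\bfS_X\in\TT_n\otimes\AA$, and then Theorem \ref{conj} upgrades this to the commutation $\bfA\bfS_X=\bfS_X\bfA$.

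For the necessity in (i), I would first show that any $\bfA$ commuting with $\bfS_X$ is Toeplitz, using the Proposition proved above. Commutation lets me replace $\bfS_X\bfA\bfS_X^*$ by $\bfA\bfS_X\bfS_X^*$, so $\bfA-\bfS_X\bfA\bfS_X^*=\bfA(\bfI-\bfS_X\bfS_X^*)$; substituting $\bfI-\bfS_X\bfS_X^*=P_0^*P_0-\bfS_X P_{n-1}^*P_0\diamond X^*$ from the Remark, I would observe that $\bfS_X P_{n-1}^*P_0\diamond X^*$ is already supported on the first block-column, hence so is the whole of $\bfA(\bfI-\bfS_X\bfS_X^*)$, i.e. it has the shape $\hat A P_0+P_0^*\cdot 0$. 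By the Proposition this certifies $\bfA\in\TT_n\otimes\AA$, say $\bfA=\bfT(A,\Omega)+\bfA_0$. To pin down $\Omega$ I would then feed $\bfA\bfS_X-\bfS_X\bfA=0$ into Theorem \ref{main}(i): reading off $A\Lambda^*-\widetilde\Omega\widetilde B^*=C\Theta^*-\widetilde\Gamma\widetilde D^*$ with the assignments coming from $\bfS_X$ and $\bfA$, and again using $\widetilde{P_1}=P_{n-1}$, the two sides are supported on the last block-column and the first block-row respectively; comparing entries off the single overlap position forces $(\widetilde\Omega)_i=A_iX$ for $i\neq 1$, i.e. $\Omega_k=X^*A_{n-k}^*=(X^*\diamond\widetilde A)_k$, which is exactly $\Omega=X^*\diamond\widetilde A$.

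Finally, (ii) I would deduce from (i) by adjunction. Taking adjoints in $\bfA\bfS_X^*=\bfS_X^*\bfA$ gives $\bfS_X\bfA^*=\bfA^*\bfS_X$, so (i) applies to $\bfA^*$ and yields $\bfA^*=\bfT(C,X^*\diamond\widetilde C)+\bfC_0$ for some $C\in\CC\otimes\AA$ and diagonal $\bfC_0$. Conjugating back and using $\bfT(A,\Omega)^*=\bfT(\Omega,A)$ turns this into $\bfA=\bfT(X^*\diamond\widetilde C,C)+\bfC_0^*$, which is the asserted form with $A=C$ and $\bfA_0^*=\bfC_0^*$. The step I expect to be the real obstacle is the necessity half of (i): keeping the bookkeeping straight for the corner term $X\diamond P_0^*P_{n-1}$ of $\bfS_X$ as it interacts with the operations $\widetilde{\cdot}$ and $\diamond$, and in particular verifying cleanly that Theorem \ref{main}(i) determines $\Omega$ on the nose (the single overlap entry being governed jointly by the corner and by the zeroth-component relation of Theorem \ref{main}(ii)) rather than leaving it underdetermined.
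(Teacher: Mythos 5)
Your argument is correct, but the route differs from the paper's in the necessity half of (i) and in (ii). The paper proves necessity in one stroke: it substitutes $\bfS=\bfS_X-X\diamond P_0^*P_{n-1}$ into $\de(\bfA)=\bfA-\bfS\bfA\bfS^*$, uses the commutation to pull $\bfA$ out of $\bfA(\bfI-\bfS_X\bfS_X^*)$, and simplifies directly to $\de(\bfA)=\bfA P_0^*P_0+P_0^*\left[X\diamond P_{n-1}\bfA\bfS^*\right]$, so that Lemma \ref{Toeplitz} simultaneously certifies $\bfA\in\TT_n\otimes\AA$ and hands over $\Omega^*=X\diamond\widetilde{A}^*$ from the first row. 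You instead split this into two steps: Toeplitzness via the Proposition together with the Remark's identity $\bfI-\bfS_X\bfS_X^*=P_0^*P_0-\bfS_XP_{n-1}^*P_0\diamond X^*$ (which cleanly reuses work already done), and then a separate application of Theorem \ref{main}(i) to $\bfA\bfS_X-\bfS_X\bfA=0$ to pin down $\Omega$. Your approach recycles more of the established machinery at the cost of the extra row-versus-column bookkeeping; the paper's is shorter because the displacement computation yields $\Omega$ for free. Concerning the obstacle you flag: the corner entry is in fact determined by Theorem \ref{main}(i) alone, since equating the $(1,n-1)$ entries gives $A_1X-\Omega_{n-1}^*=\Omega_{n-1}^*-XA_1$, and $XA_1=A_1X$ (from $X\in\AA^\prime$) then forces $\Omega_{n-1}^*=A_1X$; no appeal to the zeroth-component relation of Theorem \ref{main}(ii) is needed. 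For the sufficiency in (i) you do what the paper does (Corollary \ref{Alg} plus Theorem \ref{conj}), only spelling out the verification $A\Lambda^*=\widetilde{\Omega}\widetilde{B}^*$ that the paper leaves implicit (it is essentially the computation in Corollary \ref{5}). For (ii) your adjoint argument, using $\bfT(A,\Omega)^*=\bfT(\Omega,A)$ to transport (i) to $\bfA^*$, is a genuine improvement in economy over the paper's ``similar to (i)''.
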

	\begin{proof}
		(i)	Suppose that $\bfA\bfS_X=\bfS_X\bfA$, then we have 
		\begin{align*}
			\de(\bfA)&=\bfA-\bfS\bfA\bfS^*\\
			&= \bfA-[\bfS_X-X\diamond P_0^*P_{n-1}]\bfA[\bfS^*_X-P_{n-1}^*P_0\diamond X^*]\\
			&= \bfA-\bfS_X\bfA\bfS^*_X+\left[ \bfS_X \bfA X^*\diamond P_{n-1}^*\right] P_0 +P_0^*[  P_{n-1}\diamond X\bfA \bfS^*_X-X\diamond P_{n-1}\bfA P_{n-1}^*P_0\diamond X^*]\\
			&= \bfA(\bfI-\bfS_X\bfS^*_X)+\bfA\left[ \bfS_X X^*\diamond P_{n-1}^*\right] P_0 +P_0^*\left[ X \diamond P_{n-1}\bfA \bfS^*\right]\\
			&= \bfA P_0^*P_{0}+P_0^*\left[ X \diamond P_{n-1}\bfA \bfS^*\right].
		\end{align*}
		By Lemma \ref{Toeplitz}, $\bfA \in \TT_n\otimes \AA$, since $X\diamond P_{n-1}\bfA \bfS^*= X \diamond\widetilde{A}^*$, then  $\bfA = \bfT(A, X^*\diamond \widetilde{A})+\bfA_0$. For the converse, since $\bfA, \bfS_X \in \TT_n\otimes\AA$, then Corollary \ref{Alg} and Theorem \ref{conj} imply that , $\bfA \bfS_X=\bfS_X\bfA$. The proof of (ii) is similar to the proof of (i).
	\end{proof}
	\begin{cor}\label{5}
		If $\bfA=\bfT(A,\Omega)+\bfA_0$, $\bfB=\bfT(B,\Lambda)+\bfB_0 $, such that $\bfA$ and $\bfB$ commutes with $\bfS_X$  for some $X\in\AA^\prime$, then $\bfAB \in \TT_n\otimes \AA$. 
	\end{cor}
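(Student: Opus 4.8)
The plan is to exploit the fact that commutation with $\bfS_X$ is preserved under multiplication and then to invoke the characterization of Theorem \ref{commutantSX} directly, rather than verifying the identity of Corollary \ref{Alg} by hand. First I would note that the two hypotheses $\bfA\bfS_X=\bfS_X\bfA$ and $\bfB\bfS_X=\bfS_X\bfB$ combine, by associativity of matrix multiplication, into
\[
\bfAB\,\bfS_X=\bfA(\bfB\bfS_X)=\bfA(\bfS_X\bfB)=(\bfA\bfS_X)\bfB=(\bfS_X\bfA)\bfB=\bfS_X\,\bfAB,
\]
so that the product $\bfAB$ commutes with $\bfS_X$ as well.

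Next I would record the (easy but essential) membership $\bfAB\in\MM_n\otimes\AA$: since $\AA$ is an algebra, each block of $\bfAB$ is a finite sum of products of blocks of $\bfA$ and $\bfB$, all of which lie in $\AA$, and hence lies in $\AA$ again. With this in hand, Theorem \ref{commutantSX}(i) applies to $\bfAB$, and its forward implication (commuting with $\bfS_X$ forces the Toeplitz form $\bfT(F,X^{*}\diamond\widetilde{F})+\bfN_0$) gives at once that $\bfAB$ is block Toeplitz, i.e. $\bfAB\in\TT_n\otimes\AA$. This settles the corollary.

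A more computational alternative, closer in spirit to Corollary \ref{Alg}, is also available: Theorem \ref{commutantSX}(i) turns the commutation hypotheses into $\Omega=X^{*}\diamond\widetilde{A}$ and $\Lambda=X^{*}\diamond\widetilde{B}$, so it suffices to check the single identity $A\Lambda^{*}=\widetilde{\Omega}\widetilde{B}^{*}$ of Corollary \ref{Alg}. Using that $\AA$ is commutative and that $X\in\AA^{\prime}$ commutes with every entry, one finds $\widetilde{\Omega}=X\diamond A$, and a direct entrywise comparison shows that both $A\Lambda^{*}$ and $\widetilde{\Omega}\widetilde{B}^{*}$ have $(i,j)$ block equal to $XA_iB_{n-j}$, whence they agree. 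Either way the proof is short. The only step demanding care is the bookkeeping of the tilde and diamond operations (the index reversal $i\mapsto n-i$ together with the adjoint and the central factor $X$); in the first approach the sole substantive point is the remark that closure of $\AA$ under products is precisely what makes $\bfAB\in\MM_n\otimes\AA$, so that Theorem \ref{commutantSX} is applicable. I do not expect any genuine obstacle beyond this routine index/adjoint tracking.
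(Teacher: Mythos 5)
Your proposal is correct, and your primary argument takes a genuinely different (and arguably cleaner) route than the paper. The paper's proof is exactly your ``computational alternative'': it uses Theorem \ref{commutantSX} only to extract $\Omega=X^*\diamond\widetilde{A}$ and $\Lambda=X^*\diamond\widetilde{B}$, then verifies the identity $A\Lambda^*=\widetilde{\Omega}\widetilde{B}^*$ entrywise (using that $X\in\AA^\prime$ commutes with the blocks) and invokes Corollary \ref{Alg}. Your main argument instead observes that the commutant of $\bfS_X$ is closed under multiplication, so $\bfAB\,\bfS_X=\bfS_X\,\bfAB$, notes $\bfAB\in\MM_n\otimes\AA$ because $\AA$ is an algebra, and then applies the forward implication of Theorem \ref{commutantSX}(i) directly to the product. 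This bypasses Corollary \ref{Alg} and all tilde/diamond bookkeeping entirely, and it yields slightly more information for free: not only is $\bfAB$ Toeplitz, but it is again of the form $\bfT(F,X^*\diamond\widetilde{F})+\bfN_0$, i.e.\ the commutant of $\bfS_X$ in $\MM_n\otimes\AA$ is an algebra of such block Toeplitz matrices. What the paper's route buys in exchange is independence from the converse-free part of Theorem \ref{commutantSX} applied to a product, and it keeps the corollary in the same ``check the displacement criterion'' style as the rest of Section~3. Both your computations check out: with $\Omega_k=X^*A_{n-k}^*$ one gets $\widetilde{\Omega}_k=A_kX=XA_k$, and both $A\Lambda^*$ and $\widetilde{\Omega}\widetilde{B}^*$ have $(i,j)$ block $XA_iB_{n-j}$.
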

	\begin{proof}
		Suppose that $\bfA=\bfT(A,\Omega)+\bfA_0$, $\bfB=\bfT(B,\Lambda)+\bfB_0 $, such that for some $X\in\AA^\prime$, $\bfA$ and $\bfB$ commutes with $\bfS_X$ then it follows from Theorem \ref{commutantSX}, that  $\Omega=X^*\diamond\widetilde{A}$ and $\Lambda=X^*\diamond\widetilde{B}$, we have then   
		\[
		A\Lambda^*=A(X^*\diamond\widetilde{B})^*=X\diamond A\widetilde{B}^*=\widetilde{\Omega}\widetilde{B}^*.
		\]
		Therefore by Corollary \ref{Alg}, $\bfAB\in\TT_{n}\otimes\AA$. 
	\end{proof}
	
	\section{Characterization of Normal Block Toeplitz Matrices with Commuting Entries}
	In this section we will characterize normal block Toeplitz matrices with entries from a commutative algebra $\AA$. 
	We start with the following Lemma.
	\begin{lem}\label{normal}
		If $\bfA=(A_{i,j})_{i,j=1}^{n}\in\MM_{n}\otimes\AA$, then $\bfA$ is normal if and only if 
		\[
		\sum_{k=1,k\neq p}^{n}[A_{k,p}^*A_{k,p}-A_{p,k}A_{p,k}^*]=0\quad\hbox{for every }p = 1,2 ,\cdots, n,\]
		and 
		\[\sum_{k=1}^{n}[A_{k,i}^*A_{k,j}-A_{i,k}A_{j,k}^*]=0 \quad\hbox{for every } 1\leq i<j\leq n.
		\]
		
	\end{lem}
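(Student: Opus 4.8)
The plan is to expand the normality condition $\bfA^*\bfA=\bfA\bfA^*$ blockwise and read off the stated entry identities. First I would record that, since the $(i,k)$ block of $\bfA^*$ is $A_{k,i}^*$, the $(i,j)$ blocks of the two products are
\[
(\bfA^*\bfA)_{i,j}=\sum_{k=1}^{n}A_{k,i}^*A_{k,j},\qquad (\bfA\bfA^*)_{i,j}=\sum_{k=1}^{n}A_{i,k}A_{j,k}^*.
\]
Thus $\bfA$ is normal if and only if $\sum_{k=1}^{n}\bigl[A_{k,i}^*A_{k,j}-A_{i,k}A_{j,k}^*\bigr]=0$ for every pair $(i,j)$, and the whole lemma reduces to organizing these $n^2$ identities into the two families in the statement.

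Next I would halve the number of identities. Both $\bfA^*\bfA$ and $\bfA\bfA^*$ are self-adjoint, so the $(j,i)$ block of each is the adjoint of its $(i,j)$ block; consequently the identity indexed by $(j,i)$ is exactly the adjoint of the one indexed by $(i,j)$, and imposing one imposes the other. Hence it suffices to treat the indices with $i\le j$, which is precisely the split into the diagonal case $i=j=p$ and the strictly upper case $1\le i<j\le n$ appearing in the statement.

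For the off-diagonal case $i<j$ nothing further is required: the identity is literally $\sum_{k=1}^{n}[A_{k,i}^*A_{k,j}-A_{i,k}A_{j,k}^*]=0$, the second displayed equation of the lemma. For the diagonal case $i=j=p$ the raw identity reads $\sum_{k=1}^{n}[A_{k,p}^*A_{k,p}-A_{p,k}A_{p,k}^*]=0$, so the only point to verify is that the $k=p$ summand $A_{p,p}^*A_{p,p}-A_{p,p}A_{p,p}^*$ drops out, permitting the sum to be taken over $k\neq p$. This is where the hypothesis is used: $A_{p,p}\in\AA$, and $\AA$ being a commutative algebra closed under adjoints (as already invoked for $\bfT(A,\Omega)^*=\bfT(\Omega,A)$), the elements $A_{p,p}$ and $A_{p,p}^*$ both lie in $\AA$ and therefore commute, so the $k=p$ term vanishes.

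The main obstacle is therefore conceptual rather than computational: recognizing that the diagonal term $A_{p,p}^*A_{p,p}-A_{p,p}A_{p,p}^*$ is forced to zero precisely because the entries are drawn from a commutative algebra, so that each diagonal block is itself normal. Without commutativity the diagonal condition would have to retain its $k=p$ term, and the clean form stated in the lemma would fail. Everything else — the blockwise multiplication and the self-adjointness symmetry that folds $(j,i)$ into $(i,j)$ — is routine.
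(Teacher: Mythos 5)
Your proposal is correct; note that the paper states this lemma without any proof, so there is nothing to compare against, and your blockwise expansion of $\bfA^*\bfA-\bfA\bfA^*$ together with the self-adjointness reduction from $(j,i)$ to $(i,j)$ is exactly the routine argument the authors omit. You also correctly isolate the only non-trivial point, namely that dropping the $k=p$ summand requires each diagonal block $A_{p,p}$ to be normal; strictly speaking this needs $\AA$ to be closed under adjoints (commutativity alone does not make its elements normal), an assumption the paper makes only implicitly through its use of $\Omega_j^*$ as entries of matrices in $\TT_n\otimes\AA$, and which your proof rightly makes explicit.
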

	
	\begin{thm}\label{normal th100}
		If $\bfA = \mathbf{T}(A,\Omega)+\bfA_0\in \TT_n\otimes \AA$, 
		then $\bfA$ is normal if and only if for every $s$ and $k$, with $1\leq s,k\leq n-1,$
		\begin{equation*}
			A_sA_k^* +A_{n-s}^*A_{n-k}=\Omega_{s}\Omega^*_{k}+\Omega^*_{n-s}\Omega_{n-k}.
		\end{equation*}
	\end{thm}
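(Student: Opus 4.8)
The plan is to reduce normality of $\bfA$ to the single identity $\bfA^{*}\bfA-\bfA\bfA^{*}=0$ and then extract it from Theorem \ref{main}. The enabling observation is that, since $\bfA=\bfT(A,\Omega)+\bfA_0$ lies in $\TT_{n}\otimes\AA$, its upper-triangular entries $\Omega_m^{*}$ already belong to $\AA$; hence $\AA$ is self-adjoint, so all of $A_m,\Omega_m,A_m^{*},\Omega_m^{*}$ and the diagonal block $D$ of $\bfA_0$ commute with one another. This commutativity is what I will lean on throughout. Recalling $\bfA^{*}=\bfT(\Omega,A)+\bfA_0^{*}$, I would apply Theorem \ref{main} with the substitution $(\bfA,\bfB,\bfC,\bfD)\mapsto(\bfA^{*},\bfA,\bfA,\bfA^{*})$, so that $\bfAB-\bfCD$ becomes exactly $\bfA^{*}\bfA-\bfA\bfA^{*}$.

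First I would settle the Toeplitz criterion. Under this substitution the relevant vectors of Theorem \ref{main}(i) are $(A,\Omega,B,\Lambda)=(\Omega,A,A,\Omega)$ and $(C,\Gamma,D,\Theta)=(A,\Omega,\Omega,A)$, so Theorem \ref{main}(i) gives that $\bfA^{*}\bfA-\bfA\bfA^{*}\in\TT_{n}\otimes\AA$ if and only if
\[
\Omega\Omega^{*}-\widetilde{A}\,\widetilde{A}^{*}=AA^{*}-\widetilde{\Omega}\,\widetilde{\Omega}^{*}.
\]
Comparing the $(s,k)$ blocks of these outer products (the zeroth row and column vanish because the vectors begin with $0$) turns this into $A_sA_k^{*}+A_{n-s}^{*}A_{n-k}=\Omega_{s}\Omega^{*}_{k}+\Omega^{*}_{n-s}\Omega_{n-k}$ for $1\le s,k\le n-1$, which is exactly the asserted condition. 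This already yields one direction: if $\bfA$ is normal then $\bfA^{*}\bfA-\bfA\bfA^{*}=0$ is in particular Toeplitz, so the stated condition holds.

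For the converse I must upgrade \emph{Toeplitz} to \emph{zero}, i.e. verify the two zeroth-component relations \eqref{zerothrelation} and \eqref{adjointrelation} of Theorem \ref{main}(ii) from the single stated identity. Writing \eqref{zerothrelation} out componentwise under the present substitution, the diagonal-block contributions $D^{*}A_p$ and $D\Omega_p$ cancel termwise between the two sides, the $(0,0)$ block reduces (using $D^{*}D=DD^{*}$, as $D\in\AA$ is normal) to $\sum_{q}A_q^{*}A_q=\sum_{q}\Omega_q^{*}\Omega_q$, and for each $p\ge 1$ the ``lower'' sums cancel by commutativity, leaving
\[
\sum_{r=1}^{n-1-p}A_r^{*}A_{r+p}=\sum_{r=1}^{n-1-p}\Omega_r^{*}\Omega_{r+p}.
\]
Each such relation is obtained from the stated identity by taking $s=r+p,\ k=r$ and summing over $r$: the reflection $r\mapsto n-p-r$ sends $A_{n-r-p}^{*}A_{n-r}$ (resp. $\Omega_{n-r-p}^{*}\Omega_{n-r}$) back onto the first sum, so the identity folds down to the required relation, the $p=0$ case producing the $(0,0)$ relation in the same fashion. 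The companion relation \eqref{adjointrelation} then follows, either by the same symmetrization or because the commutator is self-adjoint (its displacement is Hermitian, so its first row is the adjoint of its first column, exactly as the remark after Theorem \ref{main} records). By Theorem \ref{main}(ii) we conclude $\bfA^{*}\bfA-\bfA\bfA^{*}=0$, so $\bfA$ is normal.

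The main obstacle is precisely this last step: part (i) of Theorem \ref{main} only forces the self-commutator to be Toeplitz, and one must show that for this particular Hermitian self-commutator \emph{Toeplitz} already forces \emph{zero}. The engine behind this is the commutativity (self-adjointness) of $\AA$, which simultaneously kills the commutators $[A_\ell^{*},D]$ arising from the diagonal block and supplies the reflection symmetry $r\mapsto n-p-r$ that collapses the auxiliary relations onto the single stated one. I would take particular care with the index bookkeeping in that reflection (the range $1\le r\le n-1-p$ and the pairing of $A_{r+p}$ with $A_{n-r}$) and with confirming that no independent constraint on $\bfA_0$ survives.
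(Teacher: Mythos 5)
Your route is genuinely different from the paper's and, at the level of strategy, cleaner. The paper never touches Theorem \ref{main}: it computes the entries of $\bfN=\bfA^*\bfA-\bfA\bfA^*$ directly via Lemma \ref{normal}, and then runs a telescoping recurrence $N_{p,p}=N_{p+1,p+1}-(B_p+B_{2m-p})$ (and its off-diagonal analogue $N_{i,j}=N_{i+1,j+1}-C_{r,i}+E_{r,2m-i-r}$) to show that normality forces the paired terms to vanish, which is the stated identity. You instead substitute $(\bfA^*,\bfA,\bfA,\bfA^*)$ into Theorem \ref{main}: part (i) identifies the stated identity as exactly ``$\bfN$ is Toeplitz'' (your computation of $\Omega\Omega^*-\widetilde A\widetilde A^*=AA^*-\widetilde\Omega\widetilde\Omega^*$ and its $(s,k)$ entries is correct), and part (ii) reduces ``$\bfN=0$'' to the first-column/first-row relations \eqref{zerothrelation}--\eqref{adjointrelation}, which you then derive from the stated identity by the reflection $r\mapsto n-p-r$. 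I checked that folding step: the middle sums $\sum_j\Omega_{p-j}A_j$ and $\sum_j A_{p-j}\Omega_j$ do cancel by commutativity of $\AA$, the $D$-terms cancel, and summing the identity at $(s,k)=(r+p,r)$ over $1\le r\le n-1-p$ does collapse to $2\sum_r A_r^*A_{r+p}=2\sum_r\Omega_r^*\Omega_{r+p}$. So your proof reuses the Section~3 machinery and makes transparent the fact (hidden in the paper's recurrence) that for a block Toeplitz $\bfA$ the self-commutator is zero as soon as it is Toeplitz; the paper's proof is more elementary but is essentially the same pairing identity carried out diagonal by diagonal.

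The one genuine soft spot is your justification of the commutativity you ``lean on throughout.'' From $\bfA\in\TT_n\otimes\AA$ you only get that the particular elements $\Omega_m$ and $\Omega_m^*$ lie in $\AA$; this does \emph{not} make $\AA$ self-adjoint, and in particular does not put $A_m^*$ or $D^*$ in $\AA$, nor does it make the diagonal block $D$ normal. You use all of this: $D^*D=DD^*$ in the $(0,0)$ relation, $A_{r+p}A_r^*=A_r^*A_{r+p}$ in the folding, and membership of $\bfA^*$ in $\TT_n\otimes\AA$ when invoking Theorem \ref{main}. Note that without some such hypothesis the theorem itself fails: for $\bfA=\bfA_0$ with $D$ non-normal the condition is vacuously true while $\bfA$ is not normal. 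The paper is silently making the same assumption (Lemma \ref{normal} drops the $k=p$ term, i.e.\ assumes the diagonal entries are normal, and the ordering of products such as $A_{p-k}^*A_{p-k}$ versus $A_{p-k}A_{p-k}^*$ is only consistent if entries commute with adjoints), so this does not put you behind the paper --- but you should state ``$\AA$ is a self-adjoint (hence consisting of commuting normal matrices) commutative algebra'' as a hypothesis rather than derive it from the Toeplitz structure, where the derivation does not go through.
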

	\begin{proof}
		
		Let $\bfN=(N_{i,j})_{i,j=1}^n=\bfA^*\bfA-\bfA\bfA^*$. Since 
		$\bfA\in\TT_{n}\otimes\AA$, it follows from Lemma \ref{normal} that $\bfA$ is normal if and only if
		$$N_{p,p}=\sum_{k=1}^{p-1}\left(\Omega_{p-k}\Omega_{p-k}^*-A_{p-k}^*A_{p-k}\right)+\sum_{k=p+1}^{n}\left(A_{k-p}^*A_{k-p}-\Omega_{k-p}\Omega^*_{k-p}\right)=0$$ 
		and 
		$$N_{i,j}=\sum_{k=1}^{i-1}\left(\Omega_{i-k}\Omega_{j-k}^*-A_{i-k}A_{j-k}^*\right) +\sum_{k=i+1}^{j-1}\left(A_{k-i}^*\Omega_{j-k}^*-\Omega_{k-i}^*A_{j-k}^*\right) +\sum_{k=j+1}^{n}\left(A_{k-i}^*A_{k-j}-\Omega_{k-i}^*\Omega_{k-j}\right)=0,$$
		if and only if 
		\begin{equation}\label{normal T}
			N_{p,p}=\sum_{k=1}^{p-1}\left(\Omega_{k}\Omega_{k}^*-A_{k}^*A_{k}\right)-\sum_{k=1}^{n-p}\left(\Omega_{k}\Omega^*_{k}-A_{k}^*A_{k}\right)=0
		\end{equation}
		and 
		\begin{equation}\label{normal T2} N_{i,j}=\sum_{k=1}^{i-1}\left(\Omega_{k}\Omega_{j-i+k}^*-A_{k}A_{j-i+k}^*\right) +\sum_{k=1}^{j-i-1}\left(A_{k}^*\Omega_{j-i-k}^*-\Omega_{k}^*A_{j-i-k}^*\right)	
			+ 	\sum_{k=1}^{n-j}\left(A_{j-i+k}^*A_{k}-\Omega_{j-i+k}^*\Omega_{k}\right)=0, 
		\end{equation}
		for every  $p = 1,2 ,\cdots, n $ and $1 \leq i < j \leq n ,$ respectively.
		We first calculate the equation (\ref{normal T}),  when $n = 2m$ for some ﬁxed positive integer $m$. If we calculate the diagonal entries of $\bfN$, then
		\begin{equation}\label{normal Td}	
			N_{p,p} =\sum_{k=1}^{p-1}B_k-\sum_{k=1}^{2m-p}B_k= -\left[ \sum_{k=1}^{(2m-p+1)-1}B_k-\sum_{k=1}^{2m-(2m-p+1)}B_k \right] =-N_{2m-p+1,2m-p+1}.
		\end{equation}
		for every $p$, where  $B_k=\Omega_{k}\Omega^*_{k}-A_{k}^*A_{k}.$ Thus, we know that $N_{p,p}=-N_{2m-p+1,2m-p+1}$ for every $p$. So it suffices to consider the diagonal entries $(p,p )$ of $\bfN$ for $p=1,2,\cdots,m$. A simple computation shows from
		(\ref{normal Td}) that
		\begin{align*}N_{m,m}&=\sum_{k=1}^{m-1}B_k-\sum_{k=1}^{m}B_k=-B_m=\left(\Omega_{m}\Omega^*_{m}-A_{m}^*A_{m}\right)=0.
		\end{align*}
		
		By recurrence for every $p=1,2,\cdots, m$, we have 
		\begin{align*}
			N_{p,p} &=\sum_{k=1}^{p-1}B_k-\sum_{k=1}^{2m-p}B_k= \sum_{k=1}^{(p+1)-1}B_k-\sum_{k=1}^{2m-(p+1)}B_k-(B_{p}+B_{2m-p})\\
			&= N_{p+1,p+1}-( B_{p}+B_{2m-p}).
		\end{align*}
		Which implies that  $B_{p}+B_{2m-p}=0$, for every $p=1,2,\cdots,m$. Therefore 
		\begin{equation}\label{normal Td1}
			A_{p}^*A_{p}+A_{2m-p}^*A_{2m-p}=\Omega_{p}\Omega_{p}^*+\Omega_{2m-p}\Omega_{2m-p}^*.	
		\end{equation}
		Next we consider the case $i<j$. we write (\ref{normal T2}) as 
		\[N_{i,j} = \sum_{k=1}^{i-1}C_{r,k}+\sum_{k=1}^{r-1}D_{r,k}+\sum_{k=1}^{2m-j}E_{r,k},\]
		where for every $1\leq r = j-i\leq n-1,$	
		\begin{eqnarray*}
			C_{r,k}&= \Omega_{k}\Omega_{r+k}^*-A_{k}A_{r+k}^*.	\\
			D_{r,k}&=  A_{k}^*\Omega_{r-k}^*-\Omega_{k}^*A_{r-k}^*.\\
			E_{r,k}&=  A_{r+k}^*A_{k}-\Omega_{r+k}^*\Omega_{k}.
		\end{eqnarray*}
		Then 
		\begin{eqnarray*}
			N_{i,i+1}& =& \sum_{k=1}^{i-1}C_{1,k}+0+\sum_{k=1}^{2m-i-1}E_{1,k}	\\
			&= &  \sum_{k=1}^{i}C_{1,k}+\sum_{k=1}^{2m-i-2}E_{1,k}-	C_{1,i}+E_{1,2m-i-1}\\
			&= &   N_{i+1,i+2}-	C_{1,i}+E_{1,2m-i-1}.
		\end{eqnarray*}
		Then $	C_{1,i}-E_{1,2m-i-1}=0$, therefore
		\begin{equation}\label{1} \Omega_{i}\Omega_{i+1}^*+\Omega_{2m-i}^*\Omega_{2m-(i+1)}=A_{i}A_{1+i}^*+A_{2m-i}^*A_{2m-(i+1)},
		\end{equation}
		and 
		\begin{eqnarray*}
			N_{i,i+2}& =& \sum_{k=1}^{i-1}C_{2,k}+D_{2,1}+\sum_{k=1}^{2m-i-2}E_{2,k}	\\
			&= &  \sum_{k=1}^{i}C_{2,k}+D_{2,1}+\sum_{k=1}^{2m-i-3}E_{2,k}-	C_{2,i}+E_{2,2m-i-2}\\
			&= &   N_{i+1,i+3}-	C_{2,i}+E_{2,2m-i-2}.
		\end{eqnarray*}
		Then $	C_{2,i}=E_{2,2m-i-2}=0$,  therefore
		\begin{equation}\label{2} \Omega_{i}\Omega_{i+2}^*+\Omega_{2m-i}^*\Omega_{2m-(i+2)}=A_{i}A_{2+i}^*+A_{2m-i}^*A_{2m-(i+2)}.
		\end{equation}
		Similar computations for $N_{i,i+r}$, $r=3,4, \cdots, 2m-i$, yields that
		\begin{equation}\label{3} \Omega_{i}\Omega_{i+r}^*+\Omega_{2m-i}^*\Omega_{2m-(i+r)}=A_{i}A_{i+r}^*+A_{2m-i}^*A_{2m-(i+r)}.
		\end{equation}
		Hence by equations (\ref{normal Td}---\ref{3}), we conclude that $\bfA$ is normal if and only if for each $1\leq s,k\leq n-1$
		
		\begin{equation}\label{fin}
			\Omega_{s}\Omega^*_{k}+\Omega^*_{n-s}\Omega_{n-k}=A_s A_k^*+A_{n-s}^*A_{n-k}.
		\end{equation}
		A similar
		method works for the case $n =2m + 1$. Hence,
		we complete the proof.
	\end{proof}
	We end this section with the following example. 
	\begin{cor}\label{normal cor}
		If $\bfA=\bfT(A,\Omega)+\bfA_0\in\TT_{n}\otimes\AA$, such that $\bfA$ commute with $\bfS_X$  for some unitary  $X\in\AA^\prime$, then $\bfA$ is normal.
	\end{cor}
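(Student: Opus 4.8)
The plan is to reduce everything to the normality criterion already established in Theorem~\ref{normal th100}, so that only a short substitution remains. First I would unwind the hypothesis with Theorem~\ref{commutantSX}(i): the assumption $\bfA\bfS_X=\bfS_X\bfA$ forces $\bfA=\bfT(A,X^*\diamond\widetilde{A})+\bfA_0$, i.e. the vector defining the upper-triangular part is pinned down by $A$ through $\Omega=X^*\diamond\widetilde{A}$. Reading this off componentwise and recalling that the $k$-th entry of $\widetilde{A}$ is $A_{n-k}^*$, it says precisely that $\Omega_k=X^*A_{n-k}^*$ for every $k$, and hence $\Omega_k^*=A_{n-k}X$.

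The key step, and the one place where unitarity of $X$ is genuinely needed, is to promote the relation $X\in\AA^\prime$ (so $X$ commutes with every entry $A_i\in\AA$) to the stronger statement that $X$ commutes with each adjoint $A_i^*$ as well. Taking adjoints in $XA_i=A_iX$ gives $A_i^*X^*=X^*A_i^*$; since $X$ is unitary we may use $X^*=X^{-1}$, and conjugating this identity by $X$ yields $A_i^*X=XA_i^*$. Thus $X$ (and $X^*$) commutes with the whole collection of entries and their adjoints, and in addition $XX^*=X^*X=I$.

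With these facts in hand I would substitute $\Omega_k=X^*A_{n-k}^*$ directly into the right-hand side of the criterion of Theorem~\ref{normal th100}. Computing the two summands,
\begin{align*}
\Omega_s\Omega_k^*&=(X^*A_{n-s}^*)(A_{n-k}X)=X^*XA_{n-s}^*A_{n-k}=A_{n-s}^*A_{n-k},\\
\Omega_{n-s}^*\Omega_{n-k}&=(A_sX)(X^*A_k^*)=A_s(XX^*)A_k^*=A_sA_k^*,
\end{align*}
where each reduction moves $X$ and $X^*$ past the factors $A_i$, $A_i^*$ using the commutation relations of the previous paragraph and then collapses $X^*X$ and $XX^*$ to $I$. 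Adding these yields $\Omega_s\Omega_k^*+\Omega_{n-s}^*\Omega_{n-k}=A_sA_k^*+A_{n-s}^*A_{n-k}$ for all $1\le s,k\le n-1$, which is exactly the condition of Theorem~\ref{normal th100}; hence $\bfA$ is normal.

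I expect the only real subtlety to be the second paragraph. The normality identity unavoidably mixes entries with their adjoints, so the bare fact $X\in\AA^\prime$ does not suffice: one must exploit unitarity both to slide $X$ through the adjoints $A_i^*$ and to eliminate the factors $X^*X$ and $XX^*$. Once that is secured, the remainder is a purely mechanical substitution into the already-proved criterion.
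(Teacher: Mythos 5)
Your proposal is correct and follows essentially the same route as the paper: invoke Theorem~\ref{commutantSX} to get $\Omega=X^*\diamond\widetilde{A}$, then substitute into the criterion of Theorem~\ref{normal th100} and use unitarity of $X$ to collapse the extra factors. Your second paragraph, justifying that unitarity lets $X$ and $X^*$ slide past the adjoints $A_i^*$, makes explicit a step the paper's computation leaves implicit, but the argument is the same.
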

	\begin{proof}
		If $\bfA\bfS_X=\bfS_X\bfA$, then it follows from Theorem \ref{commutantSX}, that $\Omega=X^*\diamond\widetilde{A}$. Since $X\in\AA^\prime$ is unitary, then for every $1\leq s,k\leq n-1,$
		\begin{align*}\Omega_{s}\Omega^*_{k}+\Omega^*_{n-s}\Omega_{n-k}
			&=X^*A_{n-s}^*A_{n-k}X+A_{s}XX^*A_{k}^*\\
			&=A_{s}A_{k}^*+A_{n-s}^*A_{n-k}.
		\end{align*}
		Therefore , Theorem \ref{normal th100} implies that  $\bfA$ is normal.  
	\end{proof}
\begin{rmk}
	In the scalar case $\AA=\bbC$, Corollary \ref{normal cor} recaptures the normality of generalized  circulant matrices (see \cite{MAKDT}). 
\end{rmk}
	\section*{Acknowledgments}
	
	\end{document}